\documentclass[a4paper,11pt]{article}

\usepackage{amsmath}
\usepackage{amsthm}
\usepackage{amsfonts}
\usepackage{amssymb}
\usepackage{t-angles}

\usepackage[all]{xy}
\usepackage{amscd}

\setlength{\topmargin}{0in}
\setlength{\textheight}{9in}   
\setlength{\textwidth}{6.2in}    
\setlength{\oddsidemargin}{0in}
\setlength{\evensidemargin}{0in}
\setlength{\headheight}{26pt}
\setlength{\headsep}{5pt}


\newcommand{\id}{\mathrm{id}}

\newcommand{\Hom}{\operatorname{Hom}}
\newcommand{\End}{\operatorname{End}}
\newcommand{\Aut}{\operatorname{Aut}}
\newcommand{\Ker}{\operatorname{Ker}}













\def\rbiprod{{\cdot\kern-.33em\triangleright\!\!\!<}}
\def\lbiprod{{>\!\!\!\triangleleft\kern-.33em\cdot\, }}

\def\lrbiprod{{\ \cdot\kern-.60em\triangleright\kern-.33em\triangleleft\kern-.33em\cdot\, }}

\def\lprod{{>\!\!\!\triangleleft\kern-.33em\ \, }}

\setcounter{page}{1}

\allowdisplaybreaks
\newtheorem{theorem}{Theorem}[section]
\newtheorem{lemma}[theorem]{Lemma}
\newtheorem{proposition}[theorem]{Proposition}

\theoremstyle{definition}
\newtheorem{definition}[theorem]{Definition}
\newtheorem{remark}[theorem]{Remark}


\begin{document}
\title{Non-abelian extensions and  automorphisms of post-Lie algebras}

\author{Lisi Bai, Tao Zhang\thanks{The corresponding author. Tao Zhang, {E-mail:} zhangtao@htu.edu.cn}\\
{\footnotesize School of Mathematics and Statistics, Henan Normal University, Xinxiang 453007, PR China}}

\date{}

 \maketitle

 \setcounter{section}{0}

\begin{abstract}
In this paper, we introduce the concepts of crossed modules of post-Lie algebras and cat$^1$-post-Lie algebras. It is proved that these two concepts are equivalent to each other. Secondly, we construct a non-abelian cohomology for post-Lie algebras to classify their non-abelian extensions.
At last, we investigate the inducibility problem of a pair of automorphisms for post-Lie algebras and construct a Wells exact sequence to solve it.
\par\smallskip
{\bf MSC 2020:} 17B40, 17B56, 18G45

\par\smallskip
{\bf Keywords:}
post-Lie algebras,  crossed modules, non-abelian extensions, automorphism.
\end{abstract}


\section{Introduction}
Post-Lie algebras were introduced by Vallette \cite{Vallette} in connection with the homology of partition posets and the study of Koszul operads.
Post-Lie algebras which are non-associative algebras play an important role in different areas of pure and applied mathematics.
Additionally, the study of universal enveloping algebras of post-Lie algebras and the free post-Lie algebra has been examined in detail \cite{Fard}. The significance of post-Lie algebras in numerical integration methods is discussed in \cite{Curry,M11}.
The description of post-Lie algebras in regularity structures for solving stochastic partial differential equations is given recently in \cite{BK,BHZ,JZ}.
Furthermore, the classification of post-Lie algebras over specific Lie algebras is presented in \cite{Novikov, Burde16}.
Recently, the deep relationship between post-Lie algebras, post-groups and skew braces was found in \cite{AEM,BGST}.

The structure of post-Lie algebras generalizes that of L-R algebras and pre-Lie algebras, see \cite{Loday,M11,Gubarev,sl2}.  They consist of a vector space $A$ equipped with a Lie bracket $[\cdot ,\cdot ]$ and a binary operation $\cdot $ satisfying the following axioms
\begin{align}
a\cdot \lbrack b,c]& =[a\cdot b,c]+[b,a\cdot c],
\\
\lbrack a,b]\cdot c& =ass(a,b,c)-ass(b,a,c).
\end{align}
where $ass(a,b,c)=a\cdot (b\cdot c)-(a\cdot b)\cdot c$ is the associator.
If the bracket $[\cdot ,\cdot ]$ is zero, then the conditions reduce to
$ass(a,b,c)-ass(b,a,c)=0,$
which is a pre-Lie algebra (also called left symmetric algebra) structure on $A$, thus the concept of post-Lie algebras is a naturally generalization of  pre-Lie algebras.

Whitehead initially introduced the concept of crossed modules for groups \cite{Whitehead}, which Gerstenhaber later extended to Lie algebras \cite{Gerstenhaber}. Kassel and Loday  used  crossed modules to interpret the third relative Chevalley-Eilenberg cohomology of Lie algebras \cite{Kassel}.
 Lie crossed modules are also equivalent to the concept of strict Lie 2-algebras \cite{Baez}. Numerous studies have explored crossed modules in various algebraic structures \cite{Baues,Casas,Zhang1,Zhang2}.
 Eilenberg and Maclane were the first to consider the theory of non-abelian extension for abstract groups \cite{abstract groups ab}. 
Karl-Hermann Neeb investigated the non-abelian extensions of topological Lie algebras in \cite{Neeb}. 
 Several studies have focused on non-abelian extensions of various types of algebras, including  Lie algebras \cite{Non-abelian-Lie,Zhang3}, 3-Lie algebras \cite{3-Liealg,3-Lie extending} and  Rota-Baxter Lie algebras \cite{RB Lie}.
Closely related is the automorphism problem for extensions of algebraic structures. Wells first introduced this problem in the context of abstract groups \cite{well}, and subsequent research has extended it to a range of other algebraic structures. For instance, Lie algebras \cite{well Lie alg}, Lie-Yamaguti algebras \cite{well Lie-Yamaguti}  and  Lie superalgebras \cite{well Lie super}.

In this paper, we introduce the concept of crossed modules within the context of post-Lie algebras and examine the relationship between crossed modules and cat$^1$-post-Lie algebras.
Furthermore, we explore the non-abelian extension theory for post-Lie algebras.
We also investigate inducibility of a pair of automorphisms in a non-abelian extension of post-Lie algebras. Let $0 \to H \to E\to A \to 0$ be a non-abelian extension of post-Lie algebras, and let $\Aut_H(E)$ denote the group of all post-Lie algebras automorphisms which keep $H$ invariant. Then there is a group homomorphism $\Phi: \Aut_H(E) \to \Aut(H) \times \Aut(A)$. A pair $(\beta, \alpha) \in \Aut(H)\times \Aut(A)$ is called inducible if it lies in the image of $\Phi$. Theorems \ref{thm-inducibility} and Theorems \ref{thm-inducibility-2} establish the necessary and sufficient conditions for a pair of automorphisms $(\beta, \alpha)$ to be inducible. Therefore, we define a set map $\mathcal{W} : \mathrm{Aut}(H ) \times \mathrm{Aut} (A ) \rightarrow {\mathcal H}^2_{nab} (A , H )$ via a non-abelian $2$-cocycle $(\rho,\phi,\psi,\sigma,\omega)$, then $(\beta, \alpha)$ is inducible  if and only if $(\beta, \alpha)$ is in the kernel of Wells map $\mathcal{W}(\beta, \alpha)$.  Finally, we construct a short exact sequence connecting automorphism groups and the second non-abelian cohomology group.

The organization of this paper is as follows. In Section 2, we recall some basic concepts about post-Lie algebras. In Section 3, we introduce the notion of crossed modules of post-Lie algebras and establish their equivalence with cat$^1$-post-Lie algebras. In Section 4, we introduce non-abelian cohomology for post-Lie algebras to classify their non-abelian extensions. In Section 5, we study the inducibility problem of a pair of automorphisms about non-abelian extensions
of post-Lie algebras and present a specific case concerning abelian extensions of post-Lie algebras.

Throughout the rest of this paper, we work over a fixed field of characteristic 0.
The space of linear maps from a vector space  $V$ to $W$ is denoted by $\Hom(V,W)$,
and the space of linear maps from a vector space  $V$ to itself is denoted by $\End(V)$.
The identity map from  a space  $V$ to itself is denoted by $\id_V$ or simply by $\id$.
\section{Preliminaries}
\begin{definition}[\cite{Vallette}]
	A  post-Lie algebra $(A, \cdot, [\cdot,\cdot])$ is a vector space equipped with a Lie algebra $(A, [\cdot,\cdot])$
 and a bilinear operation $a\cdot b$ satisfies the following compatibility conditions:
\begin{equation}\label{def3}
 [a,b]\cdot c=a\cdot (b\cdot c)-(a\cdot b)\cdot c-b\cdot (a\cdot c) +(b\cdot a)\cdot c ,
 \end{equation}
\begin{equation}\label{def4}
 a\cdot [b,c]=[a\cdot b, c]+[b,a\cdot c],
 \end{equation}
for all $a,b,c\in A $.
\end{definition}
To a post-Lie algebra $(A, \cdot, [\cdot,\cdot])$ one can associate  another bracket, defined as
\begin{align*}
 \{a,b\}=[a,b]+a\cdot b-b\cdot a ,
\end{align*}
 such that $(A,\{\cdot,\cdot\})$ becomes a Lie algebra. The condition \eqref{def3} implies the following condition
\begin{align*}
 \{a,b\}\cdot c=a\cdot(b\cdot c)-b\cdot(a\cdot c).
\end{align*}
Therefore,  $A$ is a left module over the Lie algebra $(A,\{\cdot,\cdot\})$.

A homomorphism from a post-Lie algebra $(A, \cdot, [\cdot,\cdot])$ to a post-Lie algebra $(A', \cdot', [\cdot,\cdot]')$ is a linear map $f: A\to A'$ such that for all $a,b\in A $, the following equations are satisfied:
$$ f([a,b])=[f(a),f(b)]', \quad f(a\cdot b)=f(a)\cdot' f(b).$$

\begin{definition}
Let ${A}$ be a post-Lie algebra and $H$ be a vector space. Then $H$ is called a representation of $A$ if there exists a representation $\rho: A \to \End(H)$ of the Lie algebra $(A, [\cdot,\cdot])$ and linear maps $\psi, \phi: A \to \End(H)$ such that the following equations hold:
\begin{eqnarray}
&&\rho_{(a \cdot b)} (x)=\phi_{a}(\rho_{b}(x))-\rho_{b}(\phi_{a}( x)),\\
&& \psi_{ [a, b]}(x)=\rho_{a}(\psi_{b}(x))- \rho_{b}(\psi_{a}(x)),\\
&& \psi_{(a \cdot b)}(x)=\psi_{b}(\psi_{a}(x)) -\psi_{b}(\rho_{a}(x))  - \psi_{b}(\phi_{a} (x)) + \phi_{a}(\psi_{b}(x)),\\
&&  \phi_{[a, b]} (x)=\phi_{a}( \phi_{b}(x)) - \phi_{(a \cdot b )} (x) - \phi_{b}( \phi_{a} (x)) + \phi_{(b \cdot a )} (x),
\end{eqnarray}
for all $x \in {H}$ and $a,b\in A$.
\end{definition}

The most natural example of a  representation is the following one.
Let $A$ be a post-Lie algebra. Define linear maps $\rho,\psi,\phi: A \to \End(A)$ by
$\rho_x(y)=[x,y], \psi_x(y) = \phi_y(x) = x\cdot y$, for all $x, y\in A$. Then $(\rho, \psi, \phi)$ is a representation of  $A$ which is called the
adjoint representation of  $A$ on itself.
Let $H$ be a representation of a post-Lie algebra. The semi-direct product of the post-Lie algebras $A$ and $H$  is defined on the direct sum space $A\oplus H$ with the binary operation and the Lie bracket given by
\begin{align*}
	(a, x) \cdot (b, y) := \big(a \cdot b,\, \,  \phi_{a}(y) +\psi_{b}(x) \big),\\
	[(a, x), (b, y)] := \big([a, b],\, \,  \rho_{a}(y)-\rho_{b}(x)  \big),
\end{align*}
for all $a, b \in A$ and $x, y \in H$.

\begin{definition}
Let ${A}$ and  ${H}$ be post-Lie algebras. An action of ${A}$ on ${H}$ are linear maps  $\rho:A \to \End(H),$ $\psi: A \to \End(H)$ and $\phi: A \to \End(H),$ such that $H$ is a representation  of $A$ and  the following equations hold:
\begin{eqnarray}
&&\rho_a[x,y]=[\rho_a(x), y]+[x,\rho_a(y)],\\
&&\rho_{a}(x\cdot y)=x\cdot\rho_{a}(y)+[y,\psi_{a}(x)],\\
&& \phi_{a} [x, y]=[\phi_{a}(x), y]+[x, \phi_{a}(y)],\\
&&\psi_{a}[x, y]=x \cdot\psi_{a}(y) - \psi_{a}(x \cdot y) - y \cdot\psi_{a}(x) + \psi_{a}(y \cdot x) ,\\
&& \label{def13}\phi_{a}(x \cdot y)=\rho_{a}(x)\cdot y-\psi_{a}(x)\cdot y+ x \cdot\phi_{a}(y) +\phi_{a}(x)\cdot y,
\end{eqnarray}
for all $x,y\in {H}$ and $a\in {A}$.
\end{definition}

Let $A $ and $H$ be post-Lie algebras and $A$ acts on $H$.
Then $A \oplus H$ forms a post-Lie algebra with the binary operation and the Lie bracket:
\begin{align}
(a, x) \cdot (b, y) := \big(a \cdot b,\, \, x \cdot y+ \phi_{a}(y)+\psi_{b}(x) \big),\\
[(a, x), (b, y)] := \big([a, b],\, \, [x, y]+  \rho_{a}(y) -\rho_{b}(x) \big),
\end{align}
 for all $a, b\in A$ and $x, y\in {H}$. We denote it by $A\ltimes_{(\rho,\phi,\psi)} H$ or $A\ltimes H$ for simplicity.
If $A\ltimes A$, we denote $\rho_{a}(b)=[a,b]$, $\phi_{a}(b)=b\cdot a$, and $\psi_{a}(b)=a\cdot b$.

\section{Crossed modules of post-Lie algebras}
In this section, we introduce the concept of crossed modules of post-Lie algebras and establish a one-to-one correspondence with the concept of cat$^1$-post-Lie algebras.
\begin{definition}
	Let $A $ and $H$ be two post-Lie algebras. A crossed module of post-Lie algebras $(A,H,\partial) $ is a homomorphism $\partial:H\rightarrow A$
	together with  an action $(\rho,\phi,\psi)$ of $A $ on $H$ such that the following conditions hold:
	\begin{eqnarray}
		\label{cm1}&&\partial(\rho_a(x))=[a,\partial(x)],~~~\partial(\psi_a(x))=\partial(x)\cdot a,~~~\partial(\phi_a(x))=a\cdot\partial(x),\\
		\label{cm2}&&\rho_{\partial(x)}y=[x,y],~~~~~~~~~~ \psi_{\partial(y)} x=x\cdot y=\phi_{\partial(x)}y,
	\end{eqnarray}
	for all $a\in A$ and $ x,y\in H.$ Condition (\ref{cm1}) is called equivariance, and condition (\ref{cm2}) is known as Peiffer's identity.
\end{definition}
\begin{proposition}
	Let $A $ and $H$ be two post-Lie algebras with an action $(\rho,\phi,\psi)$ of $A $ on $H$. A homomorphism $\partial:H\rightarrow A$ is a crossed module of post-Lie algebras
	if and only if the maps $(\partial,\id_ H) : H \ltimes H \to {A}\ltimes H$ and $(\id_A, \partial) : {A}\ltimes H \to {A}\ltimes{A}$  are homomorphisms of post-Lie algebras.
\end{proposition}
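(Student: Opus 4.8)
The plan is to verify the two implications by unwinding, term by term, what it means for each of the two maps to respect the product $\cdot$ and the Lie bracket $[\cdot,\cdot]$, and to check that these conditions reassemble exactly into the equivariance identities \eqref{cm1} and the Peiffer identities \eqref{cm2}. Throughout I use the standing hypotheses of the statement: $\partial\colon H\to A$ is a morphism of post-Lie algebras and $(\rho,\phi,\psi)$ is an action of $A$ on $H$, so that $H\ltimes H$, $A\ltimes H$ and $A\ltimes A$ are indeed post-Lie algebras — the first and the third carrying the adjoint actions recalled in Section 2. It therefore suffices to prove the two separate equivalences: $(\id_A,\partial)$ is a homomorphism $\iff$ \eqref{cm1} holds, and $(\partial,\id_H)$ is a homomorphism $\iff$ \eqref{cm2} holds. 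The proposition then follows at once, since by definition $(A,H,\partial)$ is a crossed module precisely when $\partial$ is a post-Lie morphism and both \eqref{cm1} and \eqref{cm2} hold.

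First I would treat $(\id_A,\partial)\colon A\ltimes H\to A\ltimes A$. Expanding $(\id_A,\partial)\big((a,x)\cdot(b,y)\big)$ and $(a,\partial x)\cdot(b,\partial y)$, the $A$-components agree automatically; after cancelling $\partial(x\cdot y)=\partial x\cdot\partial y$, equality of the second components becomes a single identity, linear in the $H$-slots $x$ and $y$, whose terms — obtained by setting $b=0$, respectively $a=0$ — are exactly the two equivariance relations of \eqref{cm1} involving $\cdot$. Running the same comparison for the bracket and cancelling $\partial[x,y]=[\partial x,\partial y]$ leaves $\partial(\rho_a(y))-\partial(\rho_b(x))=[a,\partial y]-[b,\partial x]$, whose content (take $b=0$) is precisely $\partial(\rho_a(x))=[a,\partial(x)]$. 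Conversely \eqref{cm1} plainly gives all of these displayed equations back. Hence $(\id_A,\partial)$ is a homomorphism iff \eqref{cm1} holds.

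Next I would treat $(\partial,\id_H)\colon H\ltimes H\to A\ltimes H$, where $H\ltimes H$ carries the adjoint action, so that $(x,u)\cdot(y,v)=\big(x\cdot y,\;u\cdot v+v\cdot x+y\cdot u\big)$ and $[(x,u),(y,v)]=\big([x,y],\;[u,v]+[x,v]-[y,u]\big)$. Applying $(\partial,\id_H)$ and comparing with the corresponding operations of $(\partial x,u)$ and $(\partial y,v)$ in $A\ltimes H$: the $A$-components reproduce only the assumed homomorphism property of $\partial$, while equality of the $H$-components reduces the product to $v\cdot x+y\cdot u=\phi_{\partial x}(v)+\psi_{\partial y}(u)$ and the bracket to $[x,v]-[y,u]=\rho_{\partial x}(v)-\rho_{\partial y}(u)$. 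Since $H\ltimes H=H\oplus H$ as a vector space, setting $u=0$ or $v=0$ is legitimate, and doing so isolates exactly the three Peiffer identities \eqref{cm2}: the bracket comparison yields $\rho_{\partial(x)}(y)=[x,y]$, and the product comparison yields the two relations expressing $\phi_{\partial(x)}$ and $\psi_{\partial(y)}$ through the product of $H$. Conversely \eqref{cm2} returns these displayed equations, so $(\partial,\id_H)$ is a homomorphism iff \eqref{cm2} holds, which together with the previous paragraph completes the proof.

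The only genuine work is the bookkeeping in the two middle paragraphs: one must check that extracting the coefficient of each summand produces each of the six crossed-module identities once and only once, and that the ``internal'' data — the post-Lie structure of $H$ sitting inside $H\ltimes H$, and the identity map on the $H$-slot of $A\ltimes H$ — contribute no extra constraints. They do not, precisely because $\id_H$ is trivially compatible with every operation and $\partial$ is a homomorphism by hypothesis, so no obstruction beyond \eqref{cm1} and \eqref{cm2} can arise. I expect this verification, rather than any conceptual difficulty, to be the bulk of the argument.
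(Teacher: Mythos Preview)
Your proposal is correct and follows exactly the same approach as the paper: expand each of the two maps on products and brackets, cancel the terms contributed by the assumed post-Lie morphism property of $\partial$, and read off the equivariance identities \eqref{cm1} from $(\id_A,\partial)$ and the Peiffer identities \eqref{cm2} from $(\partial,\id_H)$. The only caveat is that in your displayed formula for $(x,u)\cdot(y,v)$ in $H\ltimes H$ the two cross terms are transposed --- with the adjoint action one has $\phi_x(v)+\psi_y(u)=x\cdot v+u\cdot y$, matching $\phi_{\partial(x)}y=x\cdot y=\psi_{\partial(y)}x$ in \eqref{cm2} --- but this is precisely the bookkeeping you flagged and does not affect the argument.
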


\begin{proof}
By the definition of homomorphisms in post-Lie algebras, we have
	\begin{eqnarray*}
{(\id_A, \partial) [({a}, {x}), ({b},y)} ]={[(\id_A, \partial)({a}, {x}), (\id_A, \partial)({b},y)]},\\
{(\id_A, \partial) \Big(({a}, {x})\cdot ({b},y)\Big)}
={\Big((\id_A, \partial)({a}, {x})\Big)\cdot \Big((\id_A, \partial)({b},y)\Big)},
\end{eqnarray*}
which implies
\begin{eqnarray*}
 ([{a},{b}], \partial\rho_a(y)-\partial\rho_b(x) + \partial[{x}, y])=([{a}, {b}], [a, \partial (y)]+[\partial({x}), b]+[\partial({x}), \partial(y)]),\\
({a}\cdot{b}, \partial(\phi_a(y))+\partial(\psi_b(x)) + \partial({x}\cdot y))
=({a}\cdot {b}, a\cdot \partial (y)+\partial({x})\cdot b+\partial({x})\cdot \partial(y)).
\end{eqnarray*}
	Thus $(\id_A, \partial)$  is a homomorphism of post-Lie algebras  if and only if $\partial(\rho_a(x))=[a,\partial(x)]$, $\partial(\psi_a(x))=\partial(x)\cdot a$ and  $\partial(\phi_a(x))=a\cdot\partial(x)$.
	Similarly, we get
	\begin{eqnarray*}
{(\partial,\id_ H)[(x_1, y_1) , (x_2, y_2)]}={[((\partial,\id_ H)(x_1, y_1), (\partial,\id_ H) (x_2, y_2)]},\\
{(\partial,\id_ H)\Big( (x_1, y_1) \cdot (x_2, y_2)\Big)}={\Big((\partial,\id_ H)(x_1, y_1)\Big)\cdot\Big( (\partial,\id_ H) (x_2, y_2)\Big)}.
\end{eqnarray*}
Then, we have
	\begin{eqnarray*}
(\partial[x_1, x_2], [x_1, y_2] +[y_1, x_2]+ [y_1,  y_2] )=([\partial (x_1) , \partial (x_2)],\rho_{\partial(x_1)} y_2-\rho_{\partial(x_2)}y_1+ [y_1, y_2]),\\
 (\partial(x_1\cdot x_2), x_1\cdot y_2 +y_1\cdot x_2+ y_1\cdot  y_2 )=(\partial (x_1) \cdot \partial (x_2),\psi_{\partial(x_2)}y_1+\phi_{\partial(x_1)} y_2+ y_1\cdot y_2).
\end{eqnarray*}
	Thus, $(\partial, \id_H)$ is a homomorphism of post-Lie algebras if and only if it satisfies Peiffer's identity.
These are precisely the crossed module conditions \eqref{cm1} and \eqref{cm2}, which completes the proof.
\end{proof}

\begin{definition}
A morphism of post-Lie crossed modules from $(A,H,\partial)$ to $(A',H',\partial')$ consists of a pair of post-Lie homomorphisms $f: A \rightarrow A'$ and $g: H \rightarrow H'$ such that for all $a \in A$ and $x \in H$,
	\begin{eqnarray*}
		&&f\partial=\partial'g,\quad 	f(\rho_a(x))=\rho'_{f(a)}g(x),\\		
		&&f(\phi_a(x))=\phi'_{f(a)}g(x),\quad f(\psi_a(x))=\psi'_{f(a)}g(x).
	\end{eqnarray*}
A morphism  from $(A,H,\partial)$ to $(A',H',\partial')$ is called an isomorphism if  $f$ and $g$ are bijective maps, which is denoted by $(A,H,\partial)\cong (A',H',\partial')$.
\end{definition}
Brown and Loday introduced cat$^n$-groups as internal $n$-fold categories in the category of groups \cite{BL,Loday1}, we introduce the analogous concept of cat$^1$-post-Lie algebras.
\begin{definition}
	A cat$^1$-post-Lie algebra $(A_1,A_0,\sigma,\tau)$ consists of a post-Lie algebra $A_1$ together with a subalgebra
	$A_0$ and structural homomorphisms $\sigma,\tau:A_1\rightarrow A_0$  satisfying
	\begin{eqnarray}
		\label{cat1}&&\sigma|_{A_0}=\tau|_{A_0}=id_{A_0},\\
		\notag			&&[\ker(\sigma), \ker(\tau)]=0=[\ker(\tau), \ker(\sigma)],\\	\label{cat2}&&\ker(\sigma)\cdot \ker(\tau)=0=\ker(\tau)\cdot \ker(\sigma),
	\end{eqnarray}
where $\ker(\sigma)$ and $\ker(\tau)$ are the kernel spaces of $\sigma$ and $\tau$ respectively.
\end{definition}
\begin{definition}
	A morphism of cat$^1$-post-Lie algebras from $(A_1,A_0,\sigma,\tau)$ to $(A_1',A_0',\sigma',\tau')$ is a post-Lie homomorphism $h:A_1\rightarrow A_1'$ such that
	\begin{eqnarray*}
		&&h(A_0)\subseteq A_0',~~~~\sigma'h=h|_{A_0}\sigma,~~~~\tau'h=h|_{A_0}\tau.
	\end{eqnarray*}
A morphism  from $(A_1,A_0,\sigma,\tau)$ to $(A_1',A_0',\sigma',\tau')$  is called an isomorphism if  $h$ is a bijective map, which is denoted by $(A_1,A_0,\sigma,\tau)\cong (A_1',A_0',\sigma',\tau')$.
\end{definition}

\begin{theorem}
	There is a one-to-one correspondence between the category of  crossed modules of post-Lie algebras and cat$^1$-post-Lie algebras.
\end{theorem}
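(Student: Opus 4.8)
The plan is to construct the two functors explicitly and verify they are mutually inverse. Starting from a crossed module $(A,H,\partial)$ with action $(\rho,\phi,\psi)$, I would build a cat$^1$-post-Lie algebra by taking $A_1 := A \ltimes_{(\rho,\phi,\psi)} H$ (the semidirect product constructed above), $A_0 := A \times \{0\} \cong A$ as a subalgebra, and the structural maps $\sigma(a,x) := (a,0)$ and $\tau(a,x) := (a + \partial(x), 0)$. First I would check that $\sigma$ and $\tau$ are post-Lie homomorphisms $A_1 \to A_0$; for $\sigma$ this is immediate from the semidirect product formulas, while for $\tau$ it uses precisely the equivariance conditions \eqref{cm1} together with the fact that $\partial$ is itself a post-Lie homomorphism (e.g. $\tau((a,x)\cdot(b,y)) = (a\cdot b + \partial(x\cdot y + \phi_a(y) + \psi_b(x)), 0) = ((a+\partial x)\cdot(b+\partial y),0)$). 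Then $\sigma|_{A_0} = \tau|_{A_0} = \id_{A_0}$ is clear, $\ker(\sigma) = \{0\}\times H$, and $\ker(\tau) = \{(-\partial x, x) : x \in H\}$. The conditions \eqref{cat2} on these kernels — that the brackets and dot-products between $\ker(\sigma)$ and $\ker(\tau)$ vanish — translate, after expanding in the semidirect product, into exactly the Peiffer identities \eqref{cm2}: for instance $[(0,x),(-\partial y, y)] = ([0,-\partial y], \rho_0(y) - \rho_{-\partial y}(x) + [x,y]) = (0, \rho_{\partial y}(x) + [x,y])$, wait — one must instead use $[(0,x),(-\partial y,y)]_{A_1}$ and note it equals $(0, [x,y] - \rho_{\partial y}(x))$, which vanishes iff $\rho_{\partial y}(x) = [x,y]$; similarly for the other orderings and for $\cdot$, each giving one half of \eqref{cm2}.

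Conversely, given a cat$^1$-post-Lie algebra $(A_1, A_0, \sigma, \tau)$, I would set $A := A_0$, $H := \ker(\sigma)$ (a post-Lie subalgebra of $A_1$, hence a post-Lie algebra), and $\partial := \tau|_{\ker(\sigma)} : H \to A_0$. Since $\sigma|_{A_0} = \id$, one has a vector space decomposition $A_1 = A_0 \oplus \ker(\sigma)$, and $\partial$ is a post-Lie homomorphism because $\tau$ is. The action of $A_0$ on $H$ is defined by restricting the operations of $A_1$: $\rho_a(x) := [a,x]$, $\phi_a(x) := x\cdot a$, $\psi_a(x) := a\cdot x$ for $a\in A_0$, $x\in\ker(\sigma)$ — one checks these land in $\ker(\sigma)$ because $\sigma$ is a homomorphism and $\sigma(a) = a$, $\sigma(x) = 0$, so e.g. $\sigma([a,x]) = [\sigma a, \sigma x] = 0$. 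The representation and action axioms for $(\rho,\phi,\psi)$ are then inherited from the post-Lie algebra axioms of $A_1$. The equivariance conditions \eqref{cm1} follow from $\tau$ being a homomorphism together with $\tau|_{A_0} = \id$: $\partial(\rho_a(x)) = \tau([a,x]) = [\tau a, \tau x] = [a, \partial x]$, and similarly for $\phi,\psi$. For Peiffer \eqref{cm2} I use the second vanishing condition \eqref{cat2}: given $x,y\in\ker(\sigma)$, the element $x - \partial(x)$ has $\tau(x - \partial x) = \tau(x) - \tau(\partial x) = \partial x - \partial x = 0$ (here I need $\partial x = \tau(x) \in A_0$ so $\tau$ fixes it), so $x - \partial(x) \in \ker(\tau)$, whence $[x, y - \partial(y)] = 0$ in $A_1$ by \eqref{cat2}, i.e. $[x,y] = [x,\partial y] = \rho_{\partial y}(x)$; and likewise the dot-product relations. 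Wait — the Peiffer identity as stated reads $\rho_{\partial(x)}y = [x,y]$, so I should instead take $x - \partial x \in \ker(\tau)$ and bracket with $y \in \ker(\sigma)$ to get $[x - \partial x, y] = 0$, giving $[x,y] = [\partial x, y] = \rho_{\partial x}(y)$; the two remaining Peiffer equations come from the $\cdot$-vanishing in both orders.

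Having both constructions, the remaining work is to check (i) the two assignments are inverse on objects — starting from $(A,H,\partial)$, forming $A_1 = A\ltimes H$ and extracting $\ker(\sigma) = \{0\}\times H \cong H$ with $\tau|_{\ker\sigma}(0,x) = \partial(x)$ recovers $\partial$, and the restricted operations recover $\rho,\phi,\psi$; and conversely using $A_1 = A_0 \oplus \ker(\sigma)$ the reconstructed semidirect product is isomorphic to $A_1$ via $(a,x)\mapsto a+x$, which is a post-Lie isomorphism precisely because the cross-terms in the product of $a+x$ and $b+y$ reorganize (using $[x,y],\ x\cdot y \in \ker(\sigma)$ and the definitions of $\rho,\phi,\psi$) into the semidirect product formulas — and (ii) the assignments are functorial, i.e. a morphism of crossed modules $(f,g)$ induces $h := f\oplus g$ (in suitable coordinates) which is a morphism of cat$^1$-post-Lie algebras, and vice versa a morphism $h$ restricts to $f := h|_{A_0}$ and $g := h|_{\ker\sigma}$; compatibility of $h$ with $\sigma,\tau$ corresponds exactly to $fg = \partial' g$ wait to $f\partial = \partial' g$ and $h(A_0)\subseteq A_0'$, $h(\ker\sigma)\subseteq\ker\sigma'$. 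I expect the main obstacle to be purely bookkeeping: verifying that the seven (or so) representation/action axioms for $(\rho,\phi,\psi)$ really are equivalent, term by term, to the single post-Lie axiom system on $A_1$ restricted to mixed arguments from $A_0$ and $\ker(\sigma)$ — this is a long but mechanical unpacking, and no conceptual difficulty should arise beyond keeping the many bilinear operations and their orderings straight. Since the previous proposition already packages the crossed-module conditions as the statement that $(\id_A,\partial)$ and $(\partial,\id_H)$ are post-Lie homomorphisms, I would lean on that to shorten the verification of \eqref{cm1}–\eqref{cm2}.
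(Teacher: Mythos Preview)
Your proposal follows essentially the same route as the paper: build the cat$^1$-structure on $A\ltimes H$ with $\sigma(a,x)=a$, $\tau(a,x)=a+\partial(x)$, and in the other direction take $(A_0,\ker\sigma,\tau|_{\ker\sigma})$ with the action obtained by restricting the $A_1$-operations, then identify $A_1\cong A_0\ltimes\ker\sigma$ via $(a,x)\mapsto a+x$; the verifications you sketch (equivariance from $\tau$ being a homomorphism, Peiffer from the kernel condition applied to $x-\tau(x)\in\ker\tau$) are exactly the paper's. One small slip: with the paper's conventions (so that $\partial(\phi_a(x))=a\cdot\partial(x)$ and $\partial(\psi_a(x))=\partial(x)\cdot a$) the induced action should be $\phi_a(x):=a\cdot x$ and $\psi_a(x):=x\cdot a$ in $A_1$, i.e.\ your $\phi$ and $\psi$ are interchanged.
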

	\begin{proof}
		Given a crossed module of post-Lie algebras $(A,H,\partial)$, the corresponding cat$^1$-post-Lie algebra is \( (A \ltimes H, A, \sigma, \tau) \), where $\sigma, \tau:A \ltimes H\rightarrow A$ are given by: for all  $a,b\in A$ and $x,y\in H$
		$$\sigma(a,x) = a,~~~~\tau(a,x) = a+\partial(x) .$$
It is clear that \( \sigma \) and \( \tau \) are post-Lie homomorphisms with \(\sigma|_{A} = \tau|_{A} = \id_A\). we obtain
		$$\ker\sigma=\{(a,x)\in A \ltimes H \mid \sigma(a,x)=a=0\}=\{(0,x)\in A \ltimes H\},$$
		$$\ker\tau=\{(b,y)\in A \ltimes H \mid \tau(b,y)=b+\partial(y)=0\}=\{(-\partial(y),y)\in A \ltimes H\}.$$
Then,
$$[\ker \sigma , \ker\tau ] = [(0, x),(-\partial(y),y)] = (0,\rho_{\partial(y)}x+[x, y])= 0,$$
$$\ker \sigma \cdot \ker\tau  = (0, x)\cdot(-\partial(y),y) = (0,-\psi_{\partial(y)}x+x\cdot y)= 0,$$
$$\ker \tau \cdot \ker\sigma  = (-\partial(y),y)\cdot(0, x) = (0,-\phi_{\partial(y)}x+y\cdot x)= 0,$$
which implies
	$$[\ker(\sigma), \ker(\tau)]=0=[\ker(\tau), \ker(\sigma)],$$
		$$\ker \sigma \cdot \ker\tau =0=\ker \tau \cdot \ker\sigma.$$
These are exactly the crossed module conditions \eqref{cm1} and \eqref{cm2}. Therefore, \( (A \ltimes H, A, \sigma, \tau) \) constitutes a cat$^1$-post-Lie algebra.
		
Suppose a cat$^1$-post-Lie algebra $(A_1, A_0, \sigma, \tau)$ is given. We can construct a mapping $\tau|_{\ker\sigma} : \ker\sigma \rightarrow A_0$ and define an action $(\rho, \psi, \phi)$ of $A_0$ on $\ker\sigma$ that is induced by the Lie bracket and the bilinear operation in $A_1$.
 We will demonstrate that $(A_0, \ker\sigma, \tau|_{\ker\sigma})$ forms a crossed module of  post-Lie algebras.

Let $x, y \in \ker \sigma$. Then $\tau(x) \in A_0$. Using condition (\ref{cat1}), we have $\tau(\tau(x)) = \tau(x)$ and
$$\tau(\rho_{\tau(x)}y)=[\tau(\tau(x)),\tau(y)]=[\tau(x),\tau(y)],$$
$$\tau(\phi_{\tau(x)}y)=\tau(\tau(x))\cdot\tau(y)=\tau(x)\cdot\tau(y),$$
$$\tau(\psi_{\tau(x)}y)=\tau(y)\cdot\tau(\tau(x))=\tau(y)\cdot\tau(x).$$
Thus, it satisfies the condition (\ref{cm1}). Additionally, since
		$$\tau(\tau(x)-x)=\tau(\tau(x))-\tau(x)=\tau(x)-\tau(x)=0,$$
		it follows that $\tau(x)-x \in \ker\tau.$ According to the condition(\ref{cat2}), we have
$$[(\tau(x)-x), y]=\rho_{\tau(x)}y-[x, y]\in [\ker\tau,\ker\sigma]=0,$$
		$$y\cdot(\tau(x)-x) =\psi_{\tau(x)}y-y\cdot x\in\ker\sigma\cdot \ker\tau=0,$$
		$$(\tau(x)-x)\cdot y=\phi_{\tau(x)}y-x\cdot y\in\ker\tau\cdot \ker\sigma=0.$$
	We have established that the condition (\ref{cm2}) holds. Therefore, $(A_0,\ker\sigma,\tau|_{\ker\sigma})$ is a crossed module of post-Lie algebras.
	
 Let $[(A, H, \partial)]$ denote the isomorphism class of crossed modules of post-Lie algebras and $[(A \ltimes H, A, \sigma, \tau)]$ denote the isomorphism class of $\text{cat}^1$-post-Lie algebras. We define maps between these classes:
	\[
	\eta:[(A, H, \partial)] \rightarrow [(A_1, A_0, \sigma, \tau)] \quad \text{and} \quad \theta:[(A_1, A_0, \sigma, \tau)]\rightarrow [(A, H, \partial)],
	\]
and show that $\eta \theta = \id, \theta  \eta = \id$.

Now suppose $(A, H, \partial) \cong (A', H', \partial')$ via isomorphisms $f: A \rightarrow A'$ and $g: H \rightarrow H'$ satisfying	
	\begin{eqnarray*}
		&&f\partial=\partial'g,\quad 	f(\rho_a(x))=\rho'_{f(a)}g(x),\\		
		&&f(\phi_a(x))=\phi'_{f(a)}g(x),\quad f(\psi_a(x))=\psi'_{f(a)}g(x).
	\end{eqnarray*}
	Let $(A \ltimes H, A, \sigma, \tau)$ and $(A' \ltimes H', A', \sigma', \tau')$ be the $\text{cat}^1$-post-Lie algebras constructed from $(A, H, \partial)$ and $(A', H', \partial')$. Define a linear map $h: A \ltimes H \rightarrow A' \ltimes H'$ by $(a, x) \mapsto ( f(a),g(x))$. Since $f: A \to A'$ and $g: H \to H'$ are bijective, the map $h$ is also a bijection. For any  $a, b \in A$, $x,y \in H$, we have
	\begin{align*}
		h[(a,x),(b,y)]
		&= h\big([a, b],\, \, [x, y]+  \rho_{a}(y) -\rho_{b}(x) \big) \\
		&= (f[a, b], g([x, y]+  \rho_{a}(y) -\rho_{b}(x))) \\
		&= ([f(a), f(b)]', ([g(x), g(y)]'+  \rho'_{f(a)}(g(y)) -\rho'_{f(b)}(g(x)))) \\
		&= [h(a, x) ,h(b, y)]',
	\end{align*}
	and similarly
$h((a,x)\cdot(b,y))= h(a, x)\cdot 'h(b, y).$
	 Moreover,  we have $h \sigma = \sigma'  h$ and $h \tau = \tau' h$. Therefore, $(A \ltimes H, A, \sigma, \tau) \cong (A' \ltimes H', A', \sigma', \tau')$, which shows that $\eta$ is well-defined.

	Suppose $(A_1, A_0, \sigma, \tau) \cong (A_1', A_0', \sigma', \tau')$. Then there exists a post-Lie algebra isomorphism $h: A_1 \rightarrow A_1'$ such that $h \sigma = \sigma' h$ and $h \tau = \tau' h$. Let $(A_0, \ker\sigma, \tau|_{\ker\sigma})$ and $(A_0', \ker\sigma', \tau|_{\ker\sigma}')$ be the crossed modules constructed from $(A_1, A_0, \sigma, \tau)$ and $(A_1', A_0', \sigma', \tau')$.  Define linear maps $f:A_0 \rightarrow A_0'$ and $g:\ker\sigma \rightarrow \ker\sigma'$. Clearly, $(A_0, \ker\sigma, \tau|_{\ker\sigma}) \cong (A_0', \ker\sigma', \tau|_{\ker\sigma}')$.  Thus, $\theta$ is well-defined.
	
	Finally, let $$[(A_1, A_0, \sigma, \tau)] \overset{\theta}{\rightarrow} [(A_0, \ker\sigma, \tau|_{\ker\sigma})] \overset{\eta}{\rightarrow} [(A_0 \ltimes \ker\sigma, A_0, \sigma', \tau')],$$ where the maps $\sigma'$ and $\tau'$ are given by $\sigma'(a,x) = a, \tau'(a,x) = a+\tau|_{\ker\sigma}(x) $. Define a linear map $\hat{h}: A_0 \ltimes \ker\sigma \rightarrow A_1$ by $(a, x) \mapsto a + x$. For any $(a, x) \in \operatorname{Ker} \hat{h}$, this map is injective since $\hat{h}(a, x) = 0$ implies $a = -x \in A_0 \cap \ker \sigma = \{0\}$. 	
	For any $e \in A_1$, we have $\sigma(e) \in A_0$, and
$\sigma(e - \sigma(e)) = \sigma(e) - \sigma(e) = 0,$ which implies $e - \sigma(e) \in \ker \sigma$.  Therefore, there exists $(\sigma(e), e - \sigma(e)) \in A_0 \ltimes \ker\sigma$ such that $\hat{h}(\sigma(e), e - \sigma(e)) = e$, hence $\hat{h}$ is surjective. 	
	Since
$\hat{h}(\sigma'(a, x)) = \hat{h}(a)  \sigma(a ,x) = \sigma(\hat{h}(a, x)),$
	and similarly $\hat{h} \tau' = \tau \hat{h}$, implies that $\hat{h}$ is a post-Lie algebra homomorphism. Consequently,   $(A_1, A_0, \sigma, \tau) \cong (A_0 \ltimes \ker\sigma, A_0, \sigma', \tau')$, and hence $\eta \theta = \text{id}$. Furthermore, it follows that $\theta  \eta = \text{id}$, which completes the proof.
\end{proof}
\section{Non-abelian extension of post-Lie algebras}\label{Extensions1}

In this section, we study non-abelian extensions of a post-Lie algebra by another one. We define the second non-abelian cohomology space that classifies equivalence classes of such extensions.

\begin{definition}\label{2-cocycle}
Let $A,H$ be post-Lie algebras, where $\rho : A \rightarrow \mathrm{End}(H)$ is an action of the Lie algebra $(A, [\cdot,\cdot])$ on $H$. An extension datum of ${A}$ by $H$  is  $\Omega({A}, H)=(\rho,\,\psi,\,\phi,\, \sigma, \,\omega)$ consisting of maps
\begin{eqnarray*}
&&\rho\,: A\times H \to H,\quad\psi: H \times A \to H,
\quad\phi: A \times H \to H,\\
&&\sigma: A\times A\to H, \quad\omega: A\times A\to H,
\end{eqnarray*}
such that the following conditions hold
\begin{enumerate}
\item[(A1)]$\phi_{b}(x \cdot z)=\phi_{b}(x)\cdot z+\rho_{b}(x)\cdot z-\psi_{b} (x) \cdot z+x \cdot\phi_{b}(z) $,
\item[(A2)]$\psi_{c} [x, y]=\psi_{c} (y \cdot x)- y \cdot\psi_{c} (x)-\psi_{c}( x \cdot y)+x \cdot\psi_{c} (y)$,
\item[(A3)]$\psi_{(b \cdot c)} (x)=\psi_{c}(\psi_{b}(x))-\psi_{c} (\rho_{b}(x))-\psi_{c}(\phi_{b}(x))
+\phi_{b}(\psi_{c}(x))-x \cdot\omega(b,c)$,
\item[(A4)]$\phi_{[a, b]}(z)+\sigma(a,b)\cdot z =\phi_{(b \cdot a)}(z) -\phi_{b}(\phi_{a}(z))-\phi_{(a \cdot b)}(z)
+\phi_{a}(\phi_{b}(z))+\omega(b,a) \cdot z\\
-\omega(a,b) \cdot z$,
\item[(A5)]$\psi_{c} (\sigma(a,b))+\omega([a, b],c)=\psi_{c} (\omega(b,a))+\omega(b \cdot a,c)-\phi_{b}(\omega(a,c))\\
 -\omega(b,a \cdot c)-\psi_{c} (\omega(a,b))-\omega(a \cdot b,c)+\phi_{a}(\omega(b,c))+\omega(a,b \cdot c)$,
\item[(A6)]$\rho_{c}(x \cdot y)=x \cdot\rho_{c}(y)+[y, \psi_{c} (x)]$,
\item[(A7)]$\phi_{a} [y, z]=[\phi_{a}(y), z] + [y,\phi_{a}(z)]$,
\item[(A8)]$ \psi_{[b, c]} (x)=\rho_{b}( \psi_{c} (x))-\rho_{c}(\psi_{b} (x))-x \cdot\sigma(b,c)$,
\item[(A9)]$\rho_{(a \cdot c)}(y)=\phi_{a}( \rho_{c}(y))-\rho_{c}(\phi_{a}(y))+[y,\omega(a,c)] $,
\item[(A10)]$\phi_{a}(\sigma(b,c)) +\omega(a,[b, c])=\rho_{b}(\omega(a,c))-\rho_{c}(\omega(a,b))+\sigma(a \cdot b,c) +\sigma(b,a \cdot c)$,
\end{enumerate}
 for all $a, b, c\in A$ and $x, y, z\in {H}$.
\end{definition}

\begin{lemma}\label{def extension}
Let $\Omega({A}, H)$ be an extension datum of ${A}$ by $H$. Define on the vector space ${A}\oplus H$ the operations given by
\begin{eqnarray}
\label{product2}(a, x) \cdot (b, y) := \big(a \cdot b,\, \, x \cdot y+ \phi_{a}(y)+ \psi_{b}(x)  +\omega(a,b)\big),\\
\label{product1}[(a, x), (b, y)] := \big([a, b],\, \, [x, y]+  \rho_{a}(y)-\rho_{b}(x)  +\sigma(a,b)\big).
\end{eqnarray}
Then ${A}\oplus H$ is a post-Lie algebra if and only if the extension datum satisfy conditions (A1)--(A10).
In this case, we  denote the post-Lie algebra on ${A}\oplus H$ by $A\ltimes_{\sigma, \omega} H$.
\end{lemma}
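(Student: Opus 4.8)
The plan is to verify directly that the two operations \eqref{product2}--\eqref{product1} on $A\oplus H$ satisfy the two post-Lie axioms \eqref{def3} and \eqref{def4} (plus the fact that $[\cdot,\cdot]$ is a Lie bracket), and to organize the resulting identities so that they split exactly into the ten conditions (A1)--(A10). First I would check that $[(a,x),(b,y)]$ is a Lie bracket on $A\oplus H$: antisymmetry is immediate from antisymmetry of $[\cdot,\cdot]_A$, $[\cdot,\cdot]_H$, $\sigma$, and of $\rho_a(y)-\rho_b(x)$; the Jacobi identity, when expanded in the second coordinate, produces the terms $\rho_{[a,b]}$ versus $\rho_a\rho_b-\rho_b\rho_a$ (which is the Lie-action part of $\rho$, already assumed since $\rho$ is an action of $(A,[\cdot,\cdot])$ on $H$), the $\sigma$-cocycle identity for the Lie bracket, and the cross terms $\rho_a(\sigma(b,c))+\cdots$ — these last should match a combination of (A8)-type and (A10)-type identities restricted to the Lie part. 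I expect this to reproduce precisely those among (A1)--(A10) that involve only $\rho$, $\sigma$, and $[\cdot,\cdot]_H$.

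Next I would substitute the two operations into axiom \eqref{def4}, namely $a\cdot[b,c]=[a\cdot b,c]+[b,a\cdot c]$ with $a,b,c$ replaced by $(a,x),(b,y),(c,z)$. The first coordinate reduces to \eqref{def4} in $A$ and carries no information; the second coordinate, after collecting terms, separates into pieces according to how many $H$-components appear. The purely ``linear in one $H$-variable'' pieces give the compatibilities (A6), (A7), (A9) (and the module conditions for $\rho,\psi,\phi$ already folded into the notion of representation/action), while the pieces with no $H$-variable — i.e. the coefficients of $\sigma$ and $\omega$ alone — give (A10), the mixed Jacobi/associativity constraint tying $\sigma$ and $\omega$ together. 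Then I would do the same for axiom \eqref{def3}, $[a,b]\cdot c=a\cdot(b\cdot c)-(a\cdot b)\cdot c-b\cdot(a\cdot c)+(b\cdot a)\cdot c$: again the first coordinate is automatic, and the second coordinate, sorted by the number of $H$-arguments, yields (A1), (A2), (A3) from the terms linear in a single $H$-variable and (A4), (A5) from the $\sigma$- and $\omega$-only terms.

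The main bookkeeping obstacle — and the only real difficulty — is the sheer combinatorial bulk of expanding \eqref{def3} with three $H$-valued arguments: each of the five associator-type terms $(a,x)\cdot((b,y)\cdot(c,z))$, etc., expands into roughly a dozen summands in the second coordinate, and one must track which $\phi$, $\psi$, $\omega$ gets applied to which $\sigma$ or $\omega$ output. I would manage this by treating $H$ as graded in ``$H$-weight'' (the number of genuine $H$-inputs carried along) and checking the axiom weight-by-weight: weight-$3$ terms recover that $(H,\cdot_H,[\cdot,\cdot]_H)$ is itself a post-Lie algebra (given); weight-$2$ terms recover that $(\rho,\psi,\phi)$ is a representation together with the action compatibilities; weight-$1$ terms give (A1)--(A3) and (A6)--(A9); weight-$0$ terms give (A4), (A5), (A10). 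The converse direction is then free: reading each weight-graded computation backwards shows that if (A1)--(A10) hold then both post-Lie axioms hold on $A\oplus H$, so $A\ltimes_{\sigma,\omega}H$ is a post-Lie algebra. I would remark that the bracket-only sub-identities recover exactly the classical non-abelian $2$-cocycle conditions for Lie algebras, which serves as a useful consistency check.
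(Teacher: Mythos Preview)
Your plan is essentially the paper's own proof: direct expansion of axioms \eqref{def3} and \eqref{def4} on $A\oplus H$, with \eqref{def3} producing exactly (A1)--(A5) and \eqref{def4} producing (A6)--(A10); your weight-grading is just a convenient bookkeeping device for the same computation. Two small corrections: the paper in fact never verifies the Jacobi identity for the bracket (it silently absorbs the Lie $2$-cocycle conditions on $(\rho,\sigma)$ into the standing hypothesis that $\rho$ is ``an action''), so while your instinct to check it is good, none of (A1)--(A10) will appear there since each of them involves $\psi$, $\phi$, or $\omega$; in particular (A8) is a weight-$1$ identity from \eqref{def4} (it contains $\psi$), not from the Jacobi check.
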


\begin{proof}
%
%

 We have to check the condition \eqref{def3}
\begin{eqnarray*}
&&[(a, x),(b, y)]\cdot (c, z)\\
&=&((b, y)\cdot (a, x))\cdot (c, z) - (b, y)\cdot ((a, x)\cdot (c, z))\\
&&-((a, x)\cdot (b, y))\cdot (c, z)+(a, x)\cdot ((b, y)\cdot (c, z)).
\end{eqnarray*}
By direct computations, the left hand side is equal to
\begin{eqnarray*}
&&[(a, x),(b, y)]\cdot (c, z)\\
&=&\big([a, b],\, \, [x, y]+  \rho_{a}(y)-\rho_{b}(x) +\sigma(a,b)\big)\cdot (c, z)\\
&=&\big([a, b] \cdot c,\, \, ([x, y]+  \rho_{a}(y)-\rho_{b}(x)  +\sigma(a,b))\cdot z+ \psi_{c}([x, y]+  \rho_{a}(y)-\rho_{b}(x) +\sigma(a,b)) \\
&& +  \phi_{[a, b]} (z) +\omega([a, b],c)\big)\\
&=&\big([a, b] \cdot c,\, \, [x, y]\cdot z+ \rho_{a}(y)\cdot z- \rho_{b}(x)\cdot z  +\sigma(a,b)\cdot z+\psi_{c}[x, y] + \psi_{c}(\rho_{a}(y))\\
&& - \psi_{c}(\rho_{b}(x)) +\psi_{c}(\sigma(a,b)) + \phi_{[a, b]} (z) +\omega([a, b],c)\big),
\end{eqnarray*}
and the right hand side is equal to
\begin{eqnarray*}
&&((b, y)\cdot (a, x))\cdot (c, z) - (b, y)\cdot ((a, x)\cdot (c, z))\\
&&-((a, x)\cdot (b, y))\cdot (c, z)+(a, x)\cdot ((b, y)\cdot (c, z))\\
&=&\big(b \cdot a,\, \, y \cdot x+ \psi_{a} (y) +  \phi_{b} (x) +\omega(b,a)\big)\cdot (c, z)\\
&&- (b, y)\cdot\big(a \cdot c,\, \, x \cdot z+ \psi_{c} (x) +  \phi_{a} (z) +\omega(a,c)\big)\\
&&-\big(a \cdot b,\, \, x \cdot y+ \psi_{b} (x) +\phi_{a}( y) +\omega(a,b)\big)\cdot (c, z)\\
&&+(a, x)\cdot \big(b \cdot c,\,  y \cdot z+ \psi_{c} (y) + \phi_{b} (z) +\omega(b,c)\big)\\
&=&\big((b \cdot a) \cdot c,\,  (y \cdot x+ \psi_{a} (y) + \phi_{b} (x) +\omega(b,a)) \cdot z+ \psi_{c} (y \cdot x+ \psi_{a} (y)+ \phi_{b} (x) \\
&&+\omega(b,a))+  \phi_{(b \cdot a)} (z) +\omega(b \cdot a,c)\big)-\big(b \cdot (a \cdot c),\, y \cdot ((x \cdot z) + \psi_{c} (x)+ \phi_{a} (z)\\
&&+\omega(a,c)) + \psi_{(a \cdot c)} (y)+ \phi_{b} (x \cdot z+ \psi_{c} (x)+ \phi_{a} (z)+\omega(a,c)) +\omega(b,a \cdot c)\big)  \\
&&-\big((a \cdot b) \cdot c,\,( x \cdot y+ \psi_{b} (x) +  \phi_{a} (y)+\omega(a,b)) \cdot z + \psi_{c} ( x \cdot y+ \psi_{b} (x) + \phi_{a} (y) \\
&&+\omega(a,b))+ \phi_{(a \cdot b)} (z) +\omega(a \cdot b,c)\big)+\big(a \cdot (b \cdot c),\, \, x \cdot (y \cdot z+ \psi_{c} (y)+ \phi_{b} (z)  \\
&&+\omega(b,c))+ \psi_{(b \cdot c)}(x) + \phi_{a} (y \cdot z+ \psi_{c} (y)+ \phi_{b} (z) +\omega(b,c))+\omega(a,b \cdot c)\big).
\end{eqnarray*}
Thus the two sides are equal to each other if and only if (A1)--(A5) hold. We have to check the condition \eqref{def4}
$$(a, x)\cdot [(b, y),(c, z)]=[(a, x)\cdot (b, y), (c, z)]+[(b, y),(a, x)\cdot (c, z)].$$
By direct computations, the left hand side is equal to
\begin{eqnarray*}
&&(a, x)\cdot [(b, y),(c, z)]\\
&=&(a, x)\cdot \big([b, c],\, \, [y, z]+  \rho_{b}(z)-\rho_{c}(y)   +\sigma(b,c)\big)\\
&=&\big(a \cdot [b, c],\, \, x \cdot ([y, z]+  \rho_{b}(z)-\rho_{c}(y) +\sigma(b,c))+ \psi_{[b, c]}(x)  \\
&&+ \phi_{a} ([y, z]+  \rho_{b}(z)-\rho_{c}(y) +\sigma(b,c))+\omega(a,[b, c])\big),
\end{eqnarray*}
and the right hand side is equal to
\begin{eqnarray*}
&&[(a, x)\cdot (b, y), (c, z)]+[(b, y),(a, x)\cdot (c, z)]\\
&=&[\big(a \cdot b,\, \, x \cdot y+ \psi_{b} (x) + \phi_{a} (y) +\omega(a,b)\big), (c, z)]\\
&&+[(b, y),\big(a \cdot c,\, \, x \cdot z+ \psi_{c}(x) + \phi_{a} (z) +\omega(a,c)\big)]\\
&=&\big([a \cdot b, c],\, \, [x \cdot y+ \psi_{b} (x) + \phi_{a} (y) +\omega(a,b), z]+ \rho_{ (a \cdot b)}(z)- \rho_{c}(x \cdot y+ \psi_{b} (x)   \\
&&+ \phi_{a} (y)+\omega(a,b)) +\sigma(a \cdot b,c)\big)+\big([b, a \cdot c],\, \, [y, x \cdot z+ \psi_{c} (x) + \phi_{a} (z)+\omega(a,c)]  \\
&&+  \rho _{b} (x \cdot z+ \psi_{c} (x) + \phi_{a} (z) +\omega(a,c))-\rho_{ (a \cdot c)}(y)+\sigma(b,a \cdot c)\big).
\end{eqnarray*}
Thus the two sides are equal to each other if and only if (A6)--(A10) hold.
\end{proof}

\begin{definition}\label{equivalent}
	Let  $A $ and $H $ be two post-Lie algebras.
	
	\medskip
	
	$\bullet$ A {\bf non-abelian $2$-cocycle} of $A$ with values in $H $ is $(\rho,\phi,\psi,\sigma,\omega)$ of linear maps such that the conditions (A1)--(A10) are satisfied.
	
	\medskip
	
	$\bullet$ Let $(
	\rho,\phi,\psi,\sigma,\omega)$ and $(
	\rho',\phi',\psi',\sigma',\omega')$ be two non-abelian $2$-cocycles of the post-Lie algebra $A $ with values in $H $.
	They are said to be \textbf{equivalent} if there exists a linear map $\varphi:A\rightarrow H$ such that for all $a,b \in A$ and $x\in H,$ the followings hold:
	\begin{align}
		\rho_{a}(x)-\rho^\prime_{a}(x)=&~[\varphi(a),x]_{H}, \\
		\psi_{a}(x)-\psi^\prime_{a}(x)=&~x\cdot_{H}\varphi(a), \\
		\phi_{a}(x)-\phi^\prime_{a}(x)=&~\varphi(a)\cdot_{H}x, \\
		\sigma(a,b)-\sigma^{\prime}(a,b)
		=&~\rho^\prime_{a}(\varphi(b))-\rho^\prime_{b}(\varphi(a))-\varphi([a,b]_{A})+[\varphi(a),\varphi(b)]_{H},\\
		\omega(a,b)-\omega^{\prime}(a,b)=&~\phi^\prime_{a}(\varphi(b))+\psi^\prime_{b}(\varphi(a))-\varphi(a\cdot_{A} b)+\varphi(a)\cdot_{H}\varphi(b).
	\end{align}
	
	$\bullet$ Let ${\mathcal H}^2_{nab} (A , H )$ denote the set of equivalence classes of non-abelian 2-cocycles. This set is called the second non-abelian cohomology group of the post-Lie algebra $A$ with values in $H$.
\end{definition}

\begin{definition}
	Let $A $ and $H$ be two post-Lie algebras. A {\bf  non-abelian extension} of $A $ by $H $ is a post-Lie algebra ${E}$ equipped with a  short exact sequence of post-Lie algebras
	\begin{align}\label{abelian-diagram}
		\xymatrix{
			0 \ar[r] & H  \ar[r]^{i} & {E}  \ar[r]^{p} & A  \ar[r] & 0.
		}
	\end{align}
	We often denote a non-abelian extension as above simply by ${E} $ when the underlying short exact sequence is clear from the context.
\end{definition}

\begin{definition}\label{defn-nab-equiv}
	Let \( E \) and \( E' \) be two non-abelian extensions of \( A \) by \( H \). These two extensions are considered \textbf{equivalent} if there exists a homomorphism \(\theta: E \rightarrow E'\) of post-Lie algebras that makes the following diagram commutative
	\begin{align}\label{abelian-equiv}
		\xymatrix{
			0 \ar[r] & H   \ar@{=}[d] \ar[r]^{i} & {E}  \ar[d]^\theta \ar[r]^{p} & A   \ar@{=}[d] \ar[r] & 0 \\
			0 \ar[r] & H  \ar[r]_{i'} & {E}'  \ar[r]_{p'} & A  \ar[r] & 0.
		}
	\end{align}
	We denote by $\mathrm{Ext}_{nab}(A , H )$ the set of all equivalence classes of non-abelian extensions of $A $ by $H $.
\end{definition}

Consider a non-abelian extension $E$ of a post-Lie algebra $A$ by $H$.  A linear section of $p$ is a linear map $s: A \rightarrow {E}$ that satisfies $p \circ s = \mathrm{id}_A$. A linear  section of $p$ always exists. Given a section $s : A \rightarrow {E}$, we define maps
$\rho: A \rightarrow \mathrm{End}(H), a\mapsto \rho_a$, $\psi : A \rightarrow \mathrm{End}(H), a\mapsto \psi_a$, $\phi : A \rightarrow \mathrm{End}(H), a\mapsto \phi_a$, $\sigma :  A\times A \rightarrow H$ and $\omega :  A\times A \rightarrow H$  by
\begin{align}\label{three-maps}
	\begin{cases}
		\rho_a(x):=[s(a),x]_{{E}},\\
		\psi_a(x):=x\cdot_{{E}} s(a),\\
		\phi_a(x):=s(a)\cdot_{{E}} x,\\
		\sigma(a,b):=[s(a),s(b)]_{{E}}-s[a,b]_{A},\\
		\omega(a,b):=s(a)\cdot_{{E}} s(b)-s(a\cdot_{A} b).
	\end{cases}
\end{align}

\begin{proposition}\label{thm:2-cocylce}
	With the above notations, $\rho,\psi,\phi$  are actions of  $A$ on $H$.
	Then $(\rho,\psi,\phi,\sigma,\omega)$ is a non-abelian $2$-cocycle of $A$ with coefficients in $H$.
	Moreover, equivalent non-abelian extensions give equivalent 2-cocycles.
\end{proposition}

\begin{proof}
	First, we show that $\rho,\psi,\phi,\sigma$ and $\omega$ are well defined.
	
 Since $\ker p_{} \cong H_{}$, then for $x\in H_{}$, we have $p_{}(x)=0$.
	By the fact that $p$ is a post-Lie algebra homomorphism, we get
	$$p_{}[s(a),x]_{E}=[p_{}s(a),p_{}(x)]_{E}=[p_{}s(a), 0]_{E}=0,$$
	$$p_{}(x\cdot_{{E}} s(a))=p(x)\cdot_{{E}} ps(a)=0\cdot_{{E}} ps(a)=0,$$
	$$p_{}(s(a)\cdot_{{E}} x)=ps(a)\cdot_{{E}} (p(x)=ps(a)\cdot_{{E}} 0=0.$$	
	Thus, $[s(a), x]_E$, $x \cdot_E s(a)$ and $s(a) \cdot_E x \in \ker p \cong H$.
  This shows that $\rho$, $\psi$ and $\phi$ are well-defined.	
	
	Since $p$ is a post-Lie algebra homomorphism, we have
	\begin{eqnarray*}
		p\sigma(a,b)&=&p([s(a),s(b)]_E)-p(s[a,b]_A)\\
		&=&([ps(a),ps(b)]_E)-(ps[a,b]_A)=[a,b]-[a,b]=0,
	\end{eqnarray*}
	\begin{eqnarray*}
		p\omega(a,b)&=&p(s(a)\cdot_{{E}} s(b))-ps(a\cdot_{A} b)\\
		&=&ps(a)\cdot_{{E}} ps(b)-ps(a\cdot_{A} b)=a\cdot b-a\cdot b=0.
	\end{eqnarray*}
	Thus $\sigma(x,y)\in \ker p=H$ and $\omega(x,y)\in \ker p=H$. This show that $\sigma$ and $\omega$ are well defined.

	Next, we show that $(\rho,\psi,\phi,\sigma,\omega)$ is a non-abelian $2$-cocycle.
Since $A$ and $E$ are post-Lie algebras, i.e., they satisfy conditions \eqref{def3}--\eqref{def13}, we will only verify condition \eqref{def4} here.
By the equality
$$s(a)\cdot_E [s(b),s(c)]_E-[s(a)\cdot_E s(b), s(c)]_E+[s(b),s(a)\cdot_E s(c)]_E=0,$$	
we have	
\begin{eqnarray*}
&&s(a)\cdot_E [s(b),s(c)]_E-[s(a)\cdot_E s(b), s(c)]_E+[s(b),s(a)\cdot_E s(c)]_E\\
&=&s(a)\cdot_E (s[b,c]_{A}+\sigma(b,c))
-[s(a\cdot_{A} b)+\omega(a,b), s(c)]_E
+[s(b),s(a\cdot_{A} c)+\omega(a,c)]_E\\
&=&s(a\cdot_{A} [b,c]_{A})+\omega(a,[b,c]_{A})+\phi_a(\sigma(b,c))-s[(a\cdot_{A} b,c)]_{A}-\sigma(s(a\cdot_{A} b,c)\\
&&-\rho_c(\omega(a,b))+s[b,a\cdot_{A} c]_{A}+\omega(b,a\cdot_{A} c)+\rho_b(\omega(a,c))\\
&=&\omega(a,[b,c]_{A})+\phi_a(\sigma(b,c))
-\sigma(s(a\cdot_{A} b,c)-\rho_c(\omega(a,b))
+\omega(b,a\cdot_{A} c)+\rho_b(\omega(a,c))\\
&=&0.
	\end{eqnarray*}
Hence, we obtain the non-abelian $2$-cocycle condition (A10).
Conditions (A1)--(A9) have been verified via similar computations, and therefore $(\rho,\psi,\phi,\sigma,\omega)$ is a non-abelian $2$-cocycle.
		
Suppose that $E$ and $E'$ are equivalent non-abelian extensions, and $\theta:E\rightarrow E'$ is the Lie algebra homomorphism satisfying $\theta i=i'$, $p'\theta=p$.
Choose linear sections $s$ and $s'$ of $p$ and $p'$ respectively. Then we have
$$p'\theta s(a)=ps(a)=a=p's'(a).$$
Thus $\varphi(a)=\theta{}s(a)-s'(a)\in \Ker p'_{}=H_{}$. Since $\theta|_{H}=\id_{H}$, we obtain
$$[s'(a), x]_{E'} + [\varphi(a), x] = [\theta s(a), x]_{E'} = \theta([s(a), x]_E) = [s(a), x]_E.$$
Therefore,
$$\rho'(a)(x)+[\varphi(a),x]=\rho(a)(x).$$
Similarly, we have $\psi_{a}(x)+x\cdot_{H}\varphi(a)=\psi^\prime_{a}(x)$ and $\phi_{a}(x)+\varphi(a)\cdot_{H}x=\phi^\prime_{a}(x),$
which implies that equivalent non-abelian extensions give equivalent $\rho,\psi,\phi$.

At last, we compute
\begin{eqnarray*}
	&&\sigma(a,b)-\sigma^{\prime}(a,b)\\
	&=&	\theta([s(a),s(b)]_{E}-s[a,b]_{A})
	-([s^\prime(a),s^\prime(b)]_{E'}-s^\prime[a,b]_{A})\\
	&=&[	\theta s(a),	\theta s(b)]_{E'}-	\theta s[a,b]_{A}
	-[s^\prime(a),s^\prime(b)]_{E'}+s^\prime[a,b]_{A}\\
	&=&[(s^{\prime}+\varphi)(a),(s^{\prime}+\varphi)(b)]_{E'}-(s^{\prime}+\varphi)[a,b]_{A}-[s^\prime(a),s^\prime(b)]_{E'}+s^\prime[a,b]_{A}\\
	&=&\rho^\prime_a(\varphi(b))-\rho^\prime_{b}(\varphi(a))-\varphi([a,b]_{A})+[\varphi(a),\varphi(b)]_{H},
\end{eqnarray*}
\begin{eqnarray*}
	&&\omega(a,b)-\omega^{\prime}(a,b)\\
&=&\theta(s(a)\cdot _{{E}} s(b)-s(a\cdot_{A} b))-(s^\prime(a)\cdot_{{E'}} s^\prime(b)-s^\prime (a\cdot_{A} b))\\
&=&\theta s(a)\cdot _{{E'}} \theta s(b)-\theta s(a\cdot_{A} b)-s^\prime(a)\cdot_{{E'}} s^\prime(b)+s^\prime (a\cdot_{A} b)\\
&=&(s^{\prime}+\varphi)(a)\cdot_{{E'}}(s^{\prime}+\varphi)(b)-(s^{\prime}+\varphi)(a\cdot_{A} b)-s^\prime(a)\cdot_{{E'}} s^\prime(b)+s^\prime (a\cdot_{A} b)\\
&=&\phi^\prime_{b}(\varphi(a))+\psi^\prime_{a}(\varphi(b))-\varphi(a\cdot_{A} b)+\varphi(a)\cdot_{H}\varphi(b).
\end{eqnarray*}
Therefore the cohomology class ${\mathcal H}^2_{nab} (A , H )$  does not depend on the choice of $s$.
\end{proof}

 \begin{theorem}
 	Let $A $ and $H $ be two post-Lie algebras. Then the set of equivalence classes of non-abelian extensions of $A $ by $H $ are classified by ${\mathcal H}^2_{nab} (A , H )$.
 \end{theorem}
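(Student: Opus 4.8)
The plan is to construct mutually inverse maps between $\mathrm{Ext}_{nab}(A,H)$ and ${\mathcal H}^2_{nab}(A,H)$ and check that both are well-defined on equivalence classes. First I would define the map $\mathrm{Ext}_{nab}(A,H)\to{\mathcal H}^2_{nab}(A,H)$: given a non-abelian extension $E$, choose a linear section $s:A\to E$ of $p$, and use formulas \eqref{three-maps} to produce an extension datum $(\rho,\phi,\psi,\sigma,\omega)$. Since $E$ is a post-Lie algebra, Lemma \ref{def extension} (read in reverse) forces the conditions (A1)--(A10) to hold, so $(\rho,\phi,\psi,\sigma,\omega)$ is a non-abelian $2$-cocycle. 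The computation immediately preceding Definition \ref{equivalent} shows that a different choice of section $s'$ yields an equivalent $2$-cocycle via $\varphi=s-s'$, so the assignment descends to a well-defined map on equivalence classes of extensions. One should also check that equivalent extensions (via some $\theta:E\to E'$ as in \eqref{abelian-equiv}) give the same cohomology class: if $s$ is a section of $p$ then $\theta\circ s$ is a section of $p'$, and because $\theta|_H=\id_H$ the associated datum for $E'$ coincides with that for $E$, hence the same class.

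Next I would define the inverse map ${\mathcal H}^2_{nab}(A,H)\to\mathrm{Ext}_{nab}(A,H)$. Given a non-abelian $2$-cocycle $(\rho,\phi,\psi,\sigma,\omega)$, Lemma \ref{def extension} says that $A\oplus H$ with the operations \eqref{product2}--\eqref{product1} is a post-Lie algebra $A\ltimes_{\sigma,\omega}H$; then $0\to H\xrightarrow{i} A\ltimes_{\sigma,\omega}H\xrightarrow{p} A\to 0$ with $i(x)=(0,x)$ and $p(a,x)=a$ is a non-abelian extension (one checks $i,p$ are post-Lie homomorphisms and exactness, which is immediate from the formulas). To see this descends to equivalence classes, suppose $(\rho,\phi,\psi,\sigma,\omega)$ and $(\rho',\phi',\psi',\sigma',\omega')$ are equivalent via $\varphi:A\to H$; then I claim the map $\theta:A\ltimes_{\sigma,\omega}H\to A\ltimes_{\sigma',\omega'}H$ given by $\theta(a,x)=(a,x+\varphi(a))$ is an isomorphism of post-Lie algebras making diagram \eqref{abelian-equiv} commute. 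Verifying that $\theta$ respects $\cdot$ and $[\cdot,\cdot]$ is a direct expansion using precisely the five equivariance relations in Definition \ref{equivalent}; this is the routine but slightly lengthy computational heart of the argument.

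Finally I would check the two compositions are the identity. Starting from a cocycle, building $A\ltimes_{\sigma,\omega}H$, and then taking the obvious section $s(a)=(a,0)$ recovers exactly $(\rho,\phi,\psi,\sigma,\omega)$ from \eqref{three-maps}, so one composition is literally the identity on the nose (not just up to equivalence). Conversely, starting from an extension $E$ with section $s$, the constructed extension $A\ltimes_{\sigma,\omega}H$ is equivalent to $E$ via $\theta:A\ltimes_{\sigma,\omega}H\to E$, $\theta(a,x)=s(a)+i(x)$; this $\theta$ is a post-Lie homomorphism precisely because the structure maps of $A\ltimes_{\sigma,\omega}H$ were defined to match the transported operations on $E$, it is bijective since $s$ is a section, and it commutes with $i,p$ by construction, so it is an equivalence of extensions.

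I expect the main obstacle to be bookkeeping rather than conceptual: the verification that the shift map $\theta(a,x)=(a,x+\varphi(a))$ is a post-Lie isomorphism between $A\ltimes_{\sigma,\omega}H$ and $A\ltimes_{\sigma',\omega'}H$ requires carefully matching each term produced by $\varphi$ against the right-hand sides of the five equivalence relations, and one must be attentive that the asymmetric roles of $\phi$ (acting on the left) and $\psi$ (acting on the right) in \eqref{product2} are tracked correctly. Everything else follows formally from Lemma \ref{def extension} and the section-independence computation already carried out before Definition \ref{equivalent}.
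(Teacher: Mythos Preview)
Your proposal is correct and follows essentially the same route as the paper: build the map $\mathrm{Ext}_{nab}\to{\mathcal H}^2_{nab}$ via a section and \eqref{three-maps}, build the inverse via Lemma~\ref{def extension}, and use the shift map $(a,x)\mapsto(a,x+\varphi(a))$ to show equivalent cocycles give equivalent extensions. In fact you are more thorough than the paper, which stops after establishing well-definedness of the two maps and leaves the verification that they are mutual inverses implicit; your final paragraph closing that loop is a genuine improvement.
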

\begin{proof}
	Let $A $ and $H $ be two post-Lie algebras. Then we define a well-defined map
\begin{align*}
	\Upsilon : \mathrm{Ext}_{nab}(A , H ) \longrightarrow {\mathcal H}^2_{nab} (A , H ).
\end{align*}
as follows.

Let ${E} $ and ${E}' $ be two equivalent extensions of $A $ by $H $ (see Definition \ref{defn-nab-equiv}) and let $s: A \rightarrow {E}$ be a section of the map $p$. Then we get 	
$$p' \circ (\varphi \circ s) = (p' \circ \varphi) \circ s = p \circ s = \mathrm{id}_A$$
which shows that $s' = \varphi \circ s : A \rightarrow {E}'$ is a section of the map $p'$.

Let $(\rho',\phi',\psi',\sigma',\omega')$ be the non-abelian $2$-cocycle corresponding to the extension ${E}' $ and section $p'$.
Then for all $a, b \in A$ and $x \in H$, we have
	\begin{align*}
		\rho_a' (x) = [s'(a),x ]_{{E}'} =~& [ \varphi \circ s(a),x]_{{E}'} \\
		=~& [\varphi (s(a)),\varphi (x)]_{{E}'} \\
		=~& \varphi ([ s(a),x]_{{E}}) = \varphi \big( \rho_a (x) \big) = \rho_a (x),
	\end{align*}
	\begin{align*}
		\psi_a' (x) = x\cdot_{{E}' }s'(a) =~& x\cdot_{{E}'} \varphi \circ s(a) \\
		=~&\varphi (x)\cdot_{{E}'} \varphi (s(a))  \\
		=~& \varphi (x\cdot_{{E}}s(a)) = \varphi \big( \psi_a (x) \big) = \psi_a (x),
	\end{align*}
	\begin{align*}
		\phi_a' (x) = s'(a)\cdot_{{E}' }x =~& \varphi \circ s(a)\cdot_{{E}'} x \\
		=~& \varphi (s(a))\cdot_{{E}'} \varphi (x) \\
		=~& \varphi (s(a)\cdot_{{E}}x) = \varphi \big( \phi_a (x) \big) = \phi_a (x),
	\end{align*}
	\begin{align*}
	\sigma' (a,b) =~& [s' (a), s'(b)]_{{E}'} - s' [a,b]_A \\
	=~& [\varphi \circ s (a), \varphi \circ s (b) ]_{{E}'} - \varphi \circ s [a, b]_A \\
	=~& \varphi \big(   [s (a), s (b) ]_{{E}} - s [a,b]_A  \big) \\
	=~& \sigma (a,b ) \quad (\text{as } \varphi|_H = \mathrm{id}_H),
\end{align*}
\begin{align*}
	\omega' (a,b) =~& s' (a)\cdot_{{E}'} s'(b) - s'( a\cdot_A b) \\
	=~& \varphi \circ s (a)\cdot_{{E}'} \varphi \circ s (b)  - \varphi \circ s( a\cdot_A b) \\
	=~& \varphi \big(   s (a)\cdot_{{E}} s (b)  - s (a\cdot_A b ) \big) \\
	=~& \omega (a,b ) \quad (\text{as } \varphi|_H = \mathrm{id}_H).
\end{align*}
Thus we obtain that equivalent extensions give the equivalent non-abelian 2-cocycles $(\rho,$ $\phi,\psi,$ $\sigma,$ $\omega)$ and $(\rho',\phi',\psi',\sigma',\omega')$.
This shows that the map $\Upsilon : \mathrm{Ext}_{nab} (A , H ) \rightarrow {\mathcal H}^2_{nab} (A , H )$ assigning an equivalent class of extensions to the class of corresponding non-abelian $2$-cocycle is well-defined.

Conversely,  we define the map
\begin{align*}
	\Omega : {\mathcal H}^2_{nab} (A , H ) \longrightarrow \mathrm{Ext}_{nab}(A , H ).
\end{align*}
Let $(\rho, \phi, \psi, \sigma, \omega)$ be a non-abelian 2-cocycle. Consider the vector space $E$ with the product defined by (\ref{product1}) and (\ref{product2}).
Using the conditions (A1)--(A10), it can be easily verify that the the product satisfies the conditions \eqref{def3} and \eqref{def4} of the definition of post-Lie algebra. Note that we have denoted this post-Lie algebra by $A\ltimes_{\sigma, \omega} H$. Let $(\rho',\phi',\psi',\sigma',\omega')$ be another $2$-cocycle equivalent to $(\rho,\phi,\psi,\sigma,\omega)$.
Let $A\ltimes_{\sigma, \omega} H$ and $A'\ltimes_{\sigma', \omega'} H'$ be the post-Lie algebras induced by the $2$-cocycles $(
\rho,\phi,\psi,\sigma,\omega)$ and $(\rho',\phi',\psi',\sigma',\omega')$ respectively. We define a map
\begin{align*}
\Theta : {E} \rightarrow {E'} ~\text{ by } \Theta ((a,x)):= (a , \varphi (a) + x),~\text{for } (a, x) \in {E}.
\end{align*}
Then, it is easy to verify that $\Theta$ is a homomorphism of post-Lie algebras. This demonstrates that the map $\Theta$ defines an equivalence between the non-abelian extensions $E$ and $E'$.  Hence there is a well-defined map $\Omega : {\mathcal H}^2_{nab} (A , H ) \rightarrow \mathrm{Ext}_{nab} (A , H )$.
\end{proof}

\section{Automorphisms}
In the context of a non-abelian extension of post-Lie algebras, we consider the inducibility of a pair of post-Lie algebra automorphisms. We construct the Wells type  short exact sequence that connects  automorphism groups and the second non-abelian cohomology group.

\subsection{Inducibility for automorphisms}
In this section, our main results, Theorems \ref{thm-inducibility} and \ref{thm-inducibility-2}, establish the necessary and sufficient conditions for a pair of automorphisms to be inducible.
\medskip

Let $A $ and $H$ be two post-Lie algebras, and consider the non-abelian extension of post-Lie algebras given by the exact sequence
\begin{align*}
\xymatrix{
0 \ar[r] & H  \ar[r]^{i} & {E}  \ar[r]^{p} & A  \ar[r] & 0.
}
\end{align*}

Define $\mathrm{Aut}_H ({E} )$ as the set of all automorphisms $\gamma \in \mathrm{Aut}({E} )$ that satisfy $\gamma|_H \subset H$. For any $\gamma \in \mathrm{Aut}_H ({E} )$, it follows that $\gamma|_H \in \mathrm{Aut}(H )$. Additionally, we can define a map $\overline{\gamma} : A \rightarrow A$ by
$$\overline{\gamma} (a) := p \gamma s (a), \text{ for } a \in A.$$
Assume that \( s_{1} \) and \( s_{2} \) are two distinct sections of \( p \). Given
$ p s_{1}(x) - p s_{2}(x) = 0, $
we can infer that
$ s_{1}(x) - s_{2}(x) \in \mathrm{ker } \, p \cong H. $
This further implies that
$ \gamma(s_{1}(x) - s_{2}(x)) \in H. $
As a result, we obtain
$ p \gamma s_{1}(x) = p \gamma s_{2}(x).$
Therefore, \( \bar{\gamma} \) is independent of the choice of sections.
Consider the space $A$, which can be viewed as a subspace of $E$ through the section $s$. Notably, $E \cong H \oplus s(A)$. Since $\gamma$ is an automorphism on $E$ that preserves the space $H$, it consequently preserves the subspace $A$ as well. Therefore, the map $\overline{\gamma} = p \gamma s$ is bijective on $A$.

For any $a,b \in A$, we have
\begin{align*}
\overline{\gamma} ([a,b]_A) = p \gamma (s [a,b]_A) =~& p \gamma ( [s(a), s(b)]_{{E}} - \sigma (a,b) ) \\
=~& p \gamma ( [s(a), s(b)]_{{E}} ) \quad (\text{as } \gamma|_H \subset H \text{ and } p|_H = 0) \\
=~& [p \gamma s (a), p \gamma s (b)]_A  = [\overline{\gamma} (a), \overline{\gamma} (b)]_A,
\end{align*}
\begin{align*}
\overline{\gamma} (a\cdot_A b) = p \gamma (s (a\cdot_A b)) =~& p \gamma ( s(a)\cdot_{{E}}s(b) - \omega (a,b) ) \\
=~& p \gamma ( s(a)\cdot_{{E}} s(b) ) \quad (\text{as } \gamma|_H \subset H \text{ and } p|_H = 0) \\
=~& p \gamma s (a)\cdot_A p \gamma s (b)  = \overline{\gamma} (a)\cdot_A \overline{\gamma} (b).
\end{align*}
This demonstrates that $\overline{\gamma} : A \rightarrow A$ is an automorphism, meaning $\overline{\gamma} \in \mathrm{Aut}(A)$. Consequently, this construction results in a group homomorphism
\begin{align*}
\Phi : \mathrm{Aut}_H ({E} ) \rightarrow \mathrm{Aut}(H ) \times \mathrm{Aut}(A ), ~~ \Phi (\gamma) := (\gamma|_H, \overline{ \gamma}).
\end{align*}
A pair of automorphisms $(\beta, \alpha) \in \mathrm{Aut}(H ) \times \mathrm{Aut}(A )$ is said to be {\bf inducible} if the pair $(\beta, \alpha)$ lies in the image of $\Phi$.
In the following result, we provide a necessary and sufficient condition for the pair $(\beta, \alpha)$ to be inducible.

\begin{theorem}\label{thm-inducibility}
Let $0 \rightarrow H  \xrightarrow{i} {E}  \xrightarrow{p} A  \rightarrow 0$ be a non-abelian extension of post-Lie algebras and $(
\rho,\phi,\psi,\sigma,\omega)$ be the corresponding non-abelian $2$-cocycle induced by a section $s$. A pair $(\beta, \alpha) \in \mathrm{Aut}(H ) \times \mathrm{Aut}(A )$ is inducible if and only if there exists a linear map $\lambda \in \mathrm{Hom}(A, H)$ satisfying the following conditions:
\begin{itemize}
\item[(I)] $[ \lambda (a),\beta (x)]_{{H}}=\beta(\rho_{a}x)-\rho_{\alpha (a)}(\beta (x)),$
\item[(II)] $\beta(x)\cdot_{{H}}\lambda(a)=
    \beta(\psi_{a}x)-\psi_{\alpha(a)}\beta(x),$
\item[(III)] $\lambda(a)\cdot_{{H}}\beta(x)=
    \beta(\phi_{a}x)-\phi_{\alpha(a)}\beta(x),$
\item[(IV)] $\beta(\sigma(a_1,a_2))-\sigma(\alpha(a_1),\alpha(a_2))\\
= \rho_{\alpha (a_1)}( \lambda (a_2))-\rho_{\alpha (a_2)}(\lambda (a_1))+[ \lambda (a_1),\lambda (a_2)]_{{H}}-\lambda[a_1,a_2]_{{A}},$
\item[(V)]$ \beta(\omega(a_1,a_2))-\omega(\alpha(a_1),\alpha(a_2))\\
  =\psi_{\alpha(a_2)}\lambda(a_1)+\phi_{\alpha(a_1)}\lambda(a_2)
  +\lambda(a_1)\cdot_{{H}}\lambda(a_2)-\lambda(a_1\cdot_{{A}}a_2)$,
\end{itemize}
for all $a, a_1, a_2 \in A, x \in H$.
\end{theorem}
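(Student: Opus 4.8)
The plan is to give a direct two-way argument. For the ``only if'' direction, suppose $(\beta,\alpha)$ is inducible, so there exists $\gamma\in\mathrm{Aut}_H(E)$ with $\gamma|_H=\beta$ and $\overline{\gamma}=\alpha$. I would first analyze how $\gamma$ acts on the chosen section: since $p\gamma s(a)=\alpha(a)=p s(\alpha(a))$, the element $\gamma s(a)-s(\alpha(a))$ lies in $\ker p\cong H$, so defining $\lambda(a):=\gamma s(a)-s(\alpha(a))$ gives a well-defined linear map $\lambda\in\mathrm{Hom}(A,H)$. The next step is simply to substitute $\gamma s(a)=s(\alpha(a))+\lambda(a)$ into the defining relations \eqref{three-maps} of the $2$-cocycle and use that $\gamma$ is a post-Lie homomorphism with $\gamma|_H=\beta$. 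For instance, applying $\gamma$ to $\rho_a(x)=[s(a),x]_E$ yields $\beta(\rho_a x)=[\gamma s(a),\beta(x)]_E=[s(\alpha(a))+\lambda(a),\beta(x)]_E=\rho_{\alpha(a)}\beta(x)+[\lambda(a),\beta(x)]_H$, which is exactly (I); conditions (II) and (III) come out the same way from the formulas for $\psi$ and $\phi$. For (IV), apply $\gamma$ to $\sigma(a_1,a_2)=[s(a_1),s(a_2)]_E-s[a_1,a_2]_A$, expand $\gamma s(a_i)=s(\alpha(a_i))+\lambda(a_i)$ and $\gamma s[a_1,a_2]_A=s(\alpha[a_1,a_2])+\lambda[a_1,a_2]=s([\alpha(a_1),\alpha(a_2)])+\lambda[a_1,a_2]$, and collect terms; (V) is obtained identically from the formula for $\omega$ using the binary operation.

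For the ``if'' direction, suppose such a $\lambda$ exists. I would define $\gamma:E\to E$ by $\gamma(s(a)+x):=s(\alpha(a))+\lambda(a)+\beta(x)$ for $a\in A$, $x\in H$, using the decomposition $E\cong s(A)\oplus H$; this is clearly a linear bijection since $\alpha$ and $\beta$ are bijective (its inverse is built from $\alpha^{-1},\beta^{-1}$ and $-\beta^{-1}\lambda\alpha^{-1}$). By construction $\gamma|_H=\beta$, $\gamma$ preserves $H$, and $p\gamma s=\alpha$, so once $\gamma$ is shown to be a post-Lie homomorphism we are done. To verify $\gamma$ respects the bracket and the dot product, it suffices to check on general elements $s(a)+x$, $s(b)+y$; expanding $[s(a)+x,s(b)+y]_E$ via bilinearity and the definitions in \eqref{three-maps} gives $s[a,b]_A+\sigma(a,b)+\rho_a(y)-\rho_b(x)+[x,y]_H$, then applying $\gamma$ and comparing with $[\gamma(s(a)+x),\gamma(s(b)+y)]_E$ expanded the same way reduces, after cancellation, precisely to conditions (I) and (IV); the dot-product compatibility reduces to (II), (III) and (V). The bookkeeping here is essentially the transpose of the computation in the ``only if'' direction.

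The main obstacle is purely organizational rather than conceptual: the expansions in (IV) and (V) involve many terms, and one must be careful that $\beta$ is applied to the $H$-valued parts while $\alpha$ is applied inside $A$, and that the identity $\alpha[a_1,a_2]_A=[\alpha(a_1),\alpha(a_2)]_A$ (and its dot-product analogue) is used to rewrite $s(\alpha[a_1,a_2])$ correctly. I would present conditions (I)--(III) in full as the model computation and then state that (IV) and (V) follow by the same substitution, exhibiting just the key grouping of terms. One should also note at the outset that the resulting $\gamma$ automatically lies in $\mathrm{Aut}_H(E)$ and that $\Phi(\gamma)=(\beta,\alpha)$, closing the equivalence; no separate well-definedness check for $\overline{\gamma}$ is needed here since it was already established before the theorem statement.
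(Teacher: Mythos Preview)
Your proposal is correct and follows essentially the same argument as the paper: in both directions the key step is defining $\lambda(a)=\gamma s(a)-s(\alpha(a))$ (resp.\ $\gamma(s(a)+x)=s(\alpha(a))+\lambda(a)+\beta(x)$) and then expanding the post-Lie operations using the cocycle identities \eqref{three-maps}, with the explicit inverse $\beta^{-1}(x)-\beta^{-1}\lambda\alpha^{-1}(a)+s(\alpha^{-1}(a))$ you mention also appearing in the paper. The only difference is presentational: the paper writes out conditions (IV) and (V) in full rather than appealing to symmetry with (I)--(III).
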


\begin{proof}
Suppose the pair $(\beta, \alpha) \in \mathrm{Aut}(H ) \times \mathrm{Aut}(A )$ is inducible. Then, there exists an automorphism $\gamma \in \mathrm{Aut}_{H} ({E} )$ such that $\gamma|_{H} = \beta$ and $p \gamma s = \alpha$. For any $a \in A$, we observe that
$p \big(  (\gamma s - s \alpha)(a)  \big) = \alpha (a) - \alpha (a) = 0.$
This demonstrates that $$(\gamma s - s \alpha)(a) \in \mathrm{ker }~ p = \mathrm{im }~ i \cong H.$$ We define a map $\lambda : A \rightarrow H$ by
$\lambda(a) := (\gamma s - s \alpha)(a), \text{ for } a \in A.$
Then we have
\begin{align*}
\beta(\rho_{a}x)-\rho_{\alpha (a)}(\beta (x))
 =~&\beta ( [ s(a),x]_{{E}})-[s \alpha (a),\beta (x)]_{{E}}\\
=~& [\gamma s (a),\gamma (x)]_{{E}} -[ s \alpha (a),\beta (x)]_{{E}} \\
=~&[\gamma s (a),\beta (x)]_{{E}} -[s \alpha (a),\beta (x)]_{{E}}\\
=~& [\lambda (a),\beta (x)]_H,
\end{align*}
\begin{align*}
\beta(\psi_{a}x)-\psi_{\alpha(a)}\beta(x) =~& \beta (x \cdot_{{E}} s(a)) - \beta (x)\cdot_{{E}}s \alpha (a) \\
=~&\gamma (x) \cdot_{{E}}\gamma s (a) - \beta (x)\cdot_{{E}}s \alpha (a) \\
=~& \beta (x)\cdot_{{E}}\gamma s (a) - \beta (x)\cdot_{{E}} s \alpha (a) \\
=~& \beta (x)\cdot_H \lambda (a),
\end{align*}
\begin{align*}
\beta(\phi_{a}x)-\phi_{\alpha(a)}\beta(x) =~& \beta ( s(a)\cdot_{{E}} x) - s \alpha (a)\cdot_{{E}}\beta (x) \\
=~& \gamma s (a)\cdot_{{E}}\gamma (x) - s \alpha (a)\cdot_{{E}}\beta (x) \\
=~& \gamma s (a)\cdot_{{E}}\beta (x) - s \alpha (a)\cdot_{{E}} \beta (x) \\
=~& \lambda (a)\cdot_H \beta (x).
\end{align*}
Hence we get the conditions (I), (II) and (III). Next, we observe that
\begin{align*}
& \rho_{\alpha (a_1)}( \lambda (a_2))-\rho_{\alpha (a_2)}(\lambda (a_1))+[ \lambda (a_1),\lambda (a_2)]_{{H}}-\lambda[a_1,a_2]_{{A}} \\
&=  [ s \alpha (a_1),(\gamma s - s \alpha)(a_2)]_{{E}}-[s \alpha (a_2) ,(\gamma s - s \alpha)(a_1)]_{{E}}  \\
 &- (\gamma s - s \alpha)([a_1, a_2]_A)+ [(\gamma s - s \alpha)(a_1), (\gamma s - s \alpha)(a_2)]_H \\
&= [ s \alpha (a_1),\gamma s (a_2)]_{{E}}- [ s \alpha (a_1),s \alpha(a_2)]_{{E}}-[ s \alpha (a_2),\gamma s (a_1)]_{{E}} \\
 & + [ s \alpha (a_2),s \alpha (a_1) ]_{{E}}- \gamma s ([a_1, a_2]_A) + s [\alpha (a_1), \alpha(a_2)]_A + [\gamma s (a_1), \gamma s (a_2)]_{{E}}\\
& ~~  - [\gamma s (a_1), s \alpha (a_2)]_{{E}} - [s \alpha (a_1), \gamma s (a_2)]_{{E}} + [s \alpha (a_1), s \alpha(a_2)]_{{E}} \\
&= \big( [\gamma s (a_1), \gamma s (a_2)]_{{E}} - \gamma s ([a_1, a_2]_A) \big) - [s \alpha (a_1), s \alpha (a_2)]_{{E}} + s [\alpha(a_1), \alpha (a_2)]_A \\
&= \gamma \big(  [s(a_1), s(a_2)]_{{E}} - s [a_1, a_2]_A \big)  - \big( [s\alpha (a_1) , s \alpha(a_2)]_{{E}} - s [\alpha (a_1), \alpha(a_2)]_A  \big) \\
&= \beta (\sigma (a_1, a_2))- \sigma (\alpha (a_1), \alpha (a_2)),
\end{align*}
\begin{align*}
&\psi_{\alpha(a_2)}\lambda(a_1)+\phi_{\alpha(a_1)}\lambda(a_2)
  +\lambda(a_1)\cdot_{{H}}\lambda(a_2)-\lambda(a_1\cdot_{{A}}a_2)\\
&=(\gamma s - s \alpha)(a_1)\cdot_{{E}}s(\alpha(a_2))+s(\alpha(a_1))\cdot_{{E}}(\gamma s - s \alpha)(a_2)\\
&+(\gamma s - s \alpha)(a_1)\cdot_{{H}}(\gamma s - s \alpha)(a_2)-(\gamma s - s \alpha)(a_1\cdot_{{A}}a_2)\\
&=\gamma s (a_1)\cdot_{{E}}s(\alpha(a_2))- s \alpha(a_1)\cdot_{{E}}s(\alpha(a_2))+s(\alpha(a_1))\cdot_{{E}}\gamma s(a_2)-s(\alpha(a_1))\cdot_{{E}}s \alpha(a_2)\\
&+\gamma s (a_1)\cdot_{{H}}\gamma s (a_2)-\gamma s (a_1)\cdot_{{H}} s \alpha(a_2)- s \alpha(a_1)\cdot_{{H}}\gamma s(a_2)+ s \alpha(a_1)\cdot_{{H}}s \alpha(a_2)\\
&-\gamma s(a_1\cdot_{{A}}a_2)+s \alpha(a_1\cdot_{{A}}a_2)\\
&=\gamma(s(a_1)\cdot_{{E}} s(a_2)-\gamma s(a_1\cdot_{A} a_2))-s(\alpha(a_1))\cdot_{{E}} s(\alpha(a_2))+s(\alpha(a_1)\cdot_{A} \alpha(a_2))\\
&=\beta(s(a_1)\cdot_{{E}} s(a_2)-s(a_1\cdot_{A} a_2))-s(\alpha(a_1))\cdot_{{E}} s(\alpha(a_2))+s(\alpha(a_1)\cdot_{A} \alpha(a_2))\\
&=\beta(\omega(a_1,a_2))-\omega(\alpha(a_1),\alpha(a_2)).
  \end{align*}
This establishes conditions (IV) and (V).

Conversely, assume that there exists a linear map $\lambda \in \mathrm{Hom}(A, H)$ that satisfies conditions (I)--(V). Since $s$ is a section of $p$, any element $e \in E$ can be expressed as $e = x + s(a)$ for some $x \in H$ and $a \in A$.  We define a map $\gamma : {E} \rightarrow {E}$ by
\begin{align}\label{g-defn}
\gamma (e) = \gamma (x + s(a)) := \big(  \beta (x) + \lambda (a) \big) + s (\alpha (a)).
\end{align}
To establish that $\gamma$ is bijective, it is necessary to show that $\gamma (x + s(a)) = 0$. According to equation (\ref{g-defn}), this implies that $s(\alpha(a)) = 0$. Given that both $s$ and $\alpha$ are injective, it follows that $a = 0$. Substituting this result back into equation (\ref{g-defn}), we find that $\beta(x) = 0$, which further implies that $x = 0$. This implies $x + s(a) = 0$, establishing that $\gamma$ is injective.  Finally, for all element $e = x + s(a) \in {E}$, we consider the element $$e' = \big( \beta^{-1}(x) - \beta^{-1}\lambda \alpha^{-1} (a)  \big) + s(\alpha^{-1}(a)) \in {E}.$$ Then we have
\begin{align*}
\gamma (e') =  \big( \beta \beta^{-1} (x) - \beta \beta^{-1} \lambda \alpha^{-1} (a) + \lambda \alpha^{-1} (a) \big) + s(\alpha \alpha^{-1}(a))  = x + s(a) = e.
\end{align*}
This demonstrates that $\gamma$ is surjective, confirming our claim. Next, for all two elements $e_1 = x_1 + s(a_1)$ and $e_2 = x_2 + s(a_2)$ in the vector space ${E}$, we have
\begin{align*}
&[\gamma (e_1), \gamma (e_2)]_{{E}} \\
&= [\gamma (x_1 + s(a_1)), \gamma (x_2 + s(a_2)) ]_{{E}} \\
&= [ \beta (x_1) + \lambda (a_1) + s (\alpha (a_1)) ~,~ \beta (x_2) + \lambda (a_2) + s (\alpha (a_2))  ]_{{E}}  \\
&=[ \beta (x_1) ~,~ \beta (x_2)  ]+[ \beta (x_1)  ~,~ \lambda (a_2)  ]+[ \beta (x_1)  ~,~  s (\alpha (a_2))  ]+ [  \lambda (a_1)  ~,~ \beta (x_2) ]\\
&\quad+ [  \lambda (a_1),\lambda (a_2)  ]+ [  \lambda (a_1),~ s (\alpha (a_2))  ] +[ s (\alpha (a_1)),~ \beta (x_2)  ]+[ s (\alpha (a_1)),~  \lambda (a_2)] \\
&\quad+[ s (\alpha (a_1)),~ s (\alpha (a_2))  ]  \\
&=s[\alpha (a_1),\alpha (a_2)]+\sigma(\alpha (a_1),\alpha (a_2))+[\beta (x_1) ,\beta (x_2) ]_{{H}}+[\beta (x_1),\lambda (a_2)]_{{H}}+[ \lambda (a_1),\beta (x_2)]_{{H}}\\
&\quad +[ \lambda (a_1),\lambda (a_2)]_{{H}}-\rho_{\alpha (a_2)}(\beta (x_1))-\rho_{\alpha (a_2)}(\lambda (a_1))+ \rho_{\alpha (a_1)}(\beta (x_2))+ \rho_{\alpha (a_1)}( \lambda (a_2))\\
&= s(\alpha[a_1,a_2]_{A})+\beta(\sigma(a_1,a_2))+\beta[x_1,x_2]_{{H}}-\beta(\rho_{a_2}x_1)+\beta(\rho_{a_1}x_2)
+\lambda[a_1,a_2]_{{A}}\\
&=\gamma \big(\sigma(a_1,a_2)+s[a_1,a_2]_{A}+[x_1,x_2]_{{H}}-\rho_{a_2}x_1+\rho_{a_1}x_2\big)\\
&= \gamma \big([s(a_1),s(a_2)]_{{A}}+[x_1,x_2]_{{H}}-\rho_{a_2}x_1+\rho_{a_1}x_2\big) \\
&= \gamma \big(  [x_1 + s(a_1), x_2 + s(a_2)]_{{E}} \big) = \gamma ([e_1, e_2]_{{E}}),
\end{align*}
\begin{align*}
&\gamma (e_1)\cdot_{{E}} \gamma (e_2) \\
&= \gamma (x_1 + s(a_1))\cdot_{{E}} \gamma (x_2 + s(a_2)) \\
&= (\beta (x_1) + \lambda (a_1) + s (\alpha (a_1))) ~\cdot_{{E}}~ (\beta (x_2) + \lambda (a_2) + s (\alpha (a_2)))    \\
&=s(\alpha(a_1)\cdot_{{A}}\alpha(a_2))+\omega(\alpha(a_1),\alpha(a_2))+\beta(x_1)\cdot_{{H}}\beta(x_2)+\beta(x_1)\cdot_{{H}}\lambda(a_2)\\
&\quad+\lambda(a_1)\cdot_{{H}}\beta(x_2)+\lambda(a_1)\cdot_{{H}}\lambda(a_2)+\psi_{\alpha(a_2)}\beta(x_1)+\psi_{\alpha(a_2)}\lambda(a_1)+\phi_{\alpha(a_1)}\beta(x_2)\\
&\quad+\phi_{\alpha(a_1)}\lambda(a_2)\\
&=s(\alpha(a_1\cdot_{A} a_2))+\beta(\omega(a_1,a_2))+\beta(x_1\cdot_{{H}}x_2)+\beta(\psi_{a_2}x_1)+\beta(\phi_{a_1}x_2)
+\lambda(a_1\cdot_{{A}}a_2) \\
&=\gamma(\omega(a_1,a_2)+s(a_1\cdot_{A} a_2)+x_1\cdot_{{H}}x_2+\psi_{a_2}x_1+\phi_{a_1}x_2) \\
&=\gamma(s(a_1)\cdot_{{A}}s(a_2)+x_1\cdot_{{H}}x_2+\psi_{a_2}x_1+\phi_{a_1}x_2) \\
&= \gamma \big( ( x_1 + s(a_1))\cdot_{{E}}(x_2 + s(a_2) )\big) = \gamma (e_1\cdot_{{E}}e_2).
\end{align*}
Therefore, $\gamma : {E} \rightarrow {E}$ is a post-Lie algebra homomorphism, proving that $\gamma$ is an automorphism of the post-Lie algebra ${E}$. In other words, $\gamma \in \mathrm{Aut}({E})$.
Given that $\gamma|_H \subset H$,  it follows that $\gamma \in \mathrm{Aut}_{H} ({E} )$. Consider \(x \in H\) and \(a \in A\). We have
$$\gamma (x) =\gamma (x + s(0)) = \beta (x) $$ and
$$p \gamma s(a) = p \gamma (0 + s(a)) = p (\lambda (a) + s(\alpha(a))) = ps (\alpha (a)) = \alpha (a).
$$
Thus, we can conclude that \(\gamma|_H = \beta\) and \(p \gamma s = \alpha\). This establishes that the pair \((\beta, \alpha) \in \mathrm{Aut}(H) \times \mathrm{Aut}(A)\) is inducible.
\end{proof}

Consider the non-abelian extension $0 \rightarrow H \xrightarrow{i} E \xrightarrow{p} A \rightarrow 0$ of $A$ by $H$, with the corresponding non-abelian $2$-cocycle $(\rho, \phi, \psi, \sigma, \omega)$. For any $(\beta, \alpha) \in \mathrm{Aut}(H) \times \mathrm{Aut}(A)$, we define  $(\rho, \phi, \psi, \sigma, \omega)_{(\beta, \alpha)}$  of linear maps
\begin{eqnarray*}
	&&\rho_{( \beta, \alpha)} : A \rightarrow \mathrm{End}(H),\quad\psi_{( \beta, \alpha)} : A \rightarrow \mathrm{End}(H),
	\quad\phi_{( \beta, \alpha)} : A \rightarrow \mathrm{End}(H),\\
	&&\sigma_{(\beta, \alpha)} :  A\times A \rightarrow H, \quad\quad~\omega_{(\beta, \alpha)} :  A\times A \rightarrow H,
\end{eqnarray*}by
\begin{align}\label{aut-2-defi}
 (\rho_{(\beta, \alpha)})_a x = \beta (\rho_{\alpha^{-1}(a)} \beta^{-1}(x)),\\
  (\psi_{(\beta, \alpha)})_a x = \beta (\psi_{\alpha^{-1}(a)} \beta^{-1}(x)),\\
   (\phi_{(\beta, \alpha)})_a x = \beta (\phi_{\alpha^{-1}(a)} \beta^{-1}(x)),\\
   \sigma_{(\beta, \alpha)} (a, b) = \beta \circ \sigma (\alpha^{-1}(a), \alpha^{-1}(b)),\\
   \omega_{(\beta, \alpha)} (a, b) = \beta \circ \omega(\alpha^{-1}(a), \alpha^{-1}(b)).
\end{align}
for $a, b \in A$ and $x \in H$.

\begin{proposition}
The $(
\rho,\psi,\phi_,\sigma,\omega)_{(\beta, \alpha)}$ is a non-abelian $2$-cocycle.
\end{proposition}
\begin{proof}
Given that $(\rho, \phi, \psi, \sigma, \omega)$ is a non-abelian 2-cocycle, we substitute $a$, $b$ and $x$ with $\alpha^{-1}(a)$, $\alpha^{-1}(b)$ and $\beta^{-1}(x)$, respectively, in the conditions of a 2-cocycle. We derive the corresponding conditions for the  $(
\rho,\psi,\phi_,\sigma,\omega)_{(\beta, \alpha)}$. For example, it follows from (A8) that
\begin{align*}
~&\beta (\psi_{\alpha^{-1}[b,c]} \beta^{-1}(x))\\
=~&\beta (\rho_{\alpha^{-1}(b)} \beta^{-1}(\beta (\psi_{\alpha^{-1}(c)} \beta^{-1}(x))))
-\beta (\rho_{\alpha^{-1}(c)} \beta^{-1}(\beta (\psi_{\alpha^{-1}(b)} \beta^{-1}(x))))\\
~&- x \cdot(\beta \circ \sigma (\alpha^{-1}(b), \alpha^{-1}(c))),
\end{align*}
This is same as
\begin{align*}
~&(\psi_{(\alpha,\beta)})_{[b,c]_A} x
=(\rho_{(\alpha,\beta)})_{(b)}  (\psi_{(\alpha,\beta)})_{(c)} x
- (\rho_{(\alpha,\beta)})_{(c)}  (\psi_{(\alpha,\beta)})_{(b)}x
- x \cdot_{{E}} \sigma_{(\alpha,\beta)} (b, c),
\end{align*}
This shows that the condition (A8) follows for the $(\rho,\psi,\phi_,\sigma,\omega)_{(\beta, \alpha)}$.
The other conditions can be obtained similarly.
\end{proof}
With the above notations, Theorem \ref{thm-inducibility} can be rephrased as follows.
\begin{theorem}\label{thm-inducibility-2}
Let $0 \rightarrow H  \xrightarrow{i} {E}  \xrightarrow{p} A  \rightarrow 0$ be a non-abelian extension of post-Lie algebras. A pair of automorphisms $(\beta, \alpha) \in \mathrm{Aut}(H ) \times \mathrm{Aut}(A )$ is inducible if and only if the non-abelian $2$-cocycles $(
\rho,\phi,\psi,\sigma,\omega)$ and $(\rho,\psi,\phi_,\sigma,\omega)_{(\beta, \alpha)}$ are equivalent.
This equivalence means that they correspond to the same element in ${\mathcal H}^2_{nab}(A , H )$.
\end{theorem}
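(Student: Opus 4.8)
The plan is to obtain this simply by rephrasing Theorem \ref{thm-inducibility} through a change of the auxiliary variable. By Theorem \ref{thm-inducibility}, the pair $(\beta,\alpha)$ is inducible if and only if there exists $\lambda\in\Hom(A,H)$ satisfying conditions (I)--(V). First I would introduce $\varphi:=\lambda\circ\alpha^{-1}\in\Hom(A,H)$; since $\alpha$ is invertible, the assignment $\lambda\mapsto\varphi$ is a bijection of $\Hom(A,H)$ onto itself, so the existence of a $\lambda$ satisfying (I)--(V) is equivalent to the existence of a $\varphi$ satisfying the system obtained from (I)--(V) after substituting $a\mapsto\alpha^{-1}(a)$, $a_1\mapsto\alpha^{-1}(a_1)$, $a_2\mapsto\alpha^{-1}(a_2)$ and $x\mapsto\beta^{-1}(x)$.

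Next I would carry out that substitution term by term. On the left-hand sides the composites collapse to the twisted cocycle by its very definition \eqref{aut-2-defi}: $\beta(\rho_{\alpha^{-1}(a)}\beta^{-1}(x))=(\rho_{(\beta,\alpha)})_a(x)$, and likewise for $\psi_{(\beta,\alpha)}$ and $\phi_{(\beta,\alpha)}$, while $\beta\circ\sigma(\alpha^{-1}(a_1),\alpha^{-1}(a_2))=\sigma_{(\beta,\alpha)}(a_1,a_2)$ and $\beta\circ\omega(\alpha^{-1}(a_1),\alpha^{-1}(a_2))=\omega_{(\beta,\alpha)}(a_1,a_2)$. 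The one extra ingredient is that $\alpha\in\Aut(A)$, which gives $[\alpha^{-1}(a_1),\alpha^{-1}(a_2)]_A=\alpha^{-1}[a_1,a_2]_A$ and $\alpha^{-1}(a_1)\cdot_A\alpha^{-1}(a_2)=\alpha^{-1}(a_1\cdot_A a_2)$; hence $\lambda[\alpha^{-1}(a_1),\alpha^{-1}(a_2)]_A=\varphi([a_1,a_2]_A)$ and $\lambda(\alpha^{-1}(a_1)\cdot_A\alpha^{-1}(a_2))=\varphi(a_1\cdot_A a_2)$. A direct comparison then shows that the transformed (I), (II), (III) are exactly the first three equations of Definition \ref{equivalent}, and the transformed (IV), (V) are exactly the $\sigma$- and $\omega$-equations there, applied to the ordered pair of $2$-cocycles $(\rho_{(\beta,\alpha)},\psi_{(\beta,\alpha)},\phi_{(\beta,\alpha)},\sigma_{(\beta,\alpha)},\omega_{(\beta,\alpha)})$ and $(\rho,\phi,\psi,\sigma,\omega)$ with $\varphi$ as the intertwiner.

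Putting these together: $(\beta,\alpha)$ is inducible if and only if the two non-abelian $2$-cocycles $(\rho,\phi,\psi,\sigma,\omega)$ and $(\rho_{(\beta,\alpha)},\psi_{(\beta,\alpha)},\phi_{(\beta,\alpha)},\sigma_{(\beta,\alpha)},\omega_{(\beta,\alpha)})$ are equivalent in the sense of Definition \ref{equivalent}, i.e. represent the same class in ${\mathcal H}^2_{nab}(A,H)$, which is the claim. The only point that needs care is the bookkeeping in paragraph two: one must keep track of which of the two cocycles occupies the ``primed'' slot on the right-hand sides of the $\sigma$- and $\omega$-equations of Definition \ref{equivalent} (it is the untwisted one, $(\rho,\phi,\psi,\sigma,\omega)$), and verify that all signs, composites and brackets line up after the substitution; since equivalence of non-abelian $2$-cocycles is a symmetric relation, the order in which the pair is listed is irrelevant for the final statement. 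No computation beyond what was already done for Theorem \ref{thm-inducibility} is required.
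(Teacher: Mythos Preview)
Your proposal is correct and follows essentially the same route as the paper: both reduce to Theorem \ref{thm-inducibility}, perform the substitution $a\mapsto\alpha^{-1}(a)$, $x\mapsto\beta^{-1}(x)$, and pass between $\lambda$ and $\varphi=\lambda\circ\alpha^{-1}$ to identify conditions (I)--(V) with the equivalence relations of Definition \ref{equivalent}. The only stylistic difference is that you handle both implications at once via the bijection $\lambda\leftrightarrow\varphi$, whereas the paper treats the forward and converse directions separately (taking $\lambda:=\varphi\alpha$ for the converse); the content is the same.
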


\begin{proof}
Let the pair $(\beta, \alpha) \in \mathrm{Aut}(H) \times \mathrm{Aut}(A )$ be inducible. By Theorem \ref{thm-inducibility}, there exists a linear map $\lambda \in \mathrm{Hom} (A, H)$ satisfying (I)--(V). To apply these conditions, we replace $a, b, x$ with $\alpha^{-1}(a), \alpha^{-1}(b), \beta^{-1}(x)$, respectively. We obtain
\begin{align*}
(\rho_{(\beta, \alpha)})_a x - \rho_a x =~& [\lambda \alpha^{-1} (a),x]_H, \\
(\psi_{(\beta, \alpha)})_a x - \psi_a x =~& x\cdot_H \lambda \alpha^{-1} (a), \\
(\phi_{(\beta, \alpha)})_a x - \phi_a x =~& \lambda \alpha^{-1} (a)\cdot_H x, \\
\sigma_{(\beta, \alpha)} (a,b) - \sigma (a,b) =~&  \rho_a \lambda \alpha^{-1} (b) -\rho_b \lambda \alpha^{-1} (a) - \lambda \alpha^{-1} ([a,b]_A) + [\lambda \alpha^{-1} (a), \lambda \alpha^{-1}(b)]_H,\\
\omega_{(\beta, \alpha)} (a,b) - \omega (a,b) =~& \psi_b \lambda \alpha^{-1} (a) - \phi_a \lambda \alpha^{-1} (b) - \lambda \alpha^{-1} (a\cdot_A b) + \lambda \alpha^{-1} (a)\cdot_H \lambda \alpha^{-1}(b).
\end{align*}
This shows that $(\rho,\phi,\psi,\sigma,\omega)$ and $(\rho,\psi,\phi_,\sigma,\omega)_{(\beta, \alpha)}$ are equivalent and the equivalence is given by the map $\lambda \alpha^{-1} : A \rightarrow H$.

Conversely, suppose that the non-abelian $2$-cocycles $(\rho,\phi,\psi,\sigma,\omega)$ and $(\rho,\psi,\phi_,\sigma,\omega)_{(\beta, \alpha)}$ are equivalent, with the equivalence given by a map $\varphi : A \rightarrow H$. It can be easily verified that the map  $\lambda := \varphi \alpha : A \rightarrow H$ satisfies the conditions (I), (II), (III), (IV) and (V) of Theorem \ref{thm-inducibility}. Therefore, the pair $(\beta, \alpha) \in \mathrm{Aut} (H) \times \mathrm{Aut} (A )$ is inducible.
The proof is completed.
\end{proof}

\subsection{Wells exact sequence}

In this section, we define the Wells map associated with a non-abelian extension of post-Lie algebras and we find that it fits into a short exact sequence.

Let $0 \rightarrow H  \xrightarrow{i} {E}  \xrightarrow{p} A  \rightarrow 0$ be a non-abelian extension of the post-Lie algebra $A $ by $H $. This extension induces a corresponding non-abelian $2$-cocycle  $(
\rho,\phi,\psi,\sigma,\omega)$ through a fixed section $s$. We define a set map $\mathcal{W} : \mathrm{Aut}(H ) \times \mathrm{Aut} (A ) \rightarrow {\mathcal H}^2_{nab} (A , H )$ as
\begin{align}\label{wells-m}
\mathcal{W}(\beta, \alpha) = [(
\rho,\psi,\phi_,\sigma,\omega)_{(\beta, \alpha)} - (
\rho,\phi,\psi,\sigma,\omega)]
\end{align}
the equivalence class of $(
\rho,\psi,\phi_,\sigma,\omega)_{(\beta, \alpha)} - (
\rho,\phi,\psi,\sigma,\omega)$.
It is referred to as the {\bf Wells map} and denoted by $\mathcal{W}$.

\medskip

\begin{proposition}\label{wells-s-ind}
The Wells map $\mathcal{W}$ does not depend on the choice of sections.
\end{proposition}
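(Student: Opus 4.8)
The plan is to compare the non-abelian $2$-cocycles arising from two different sections and show that the Wells map produces the same cohomology class in both cases. Fix two sections $s, s'$ of $p$, and let $(\rho,\phi,\psi,\sigma,\omega)$ and $(\rho',\phi',\psi',\sigma',\omega')$ be the corresponding non-abelian $2$-cocycles, as in \eqref{three-maps}. From the computation preceding Definition \ref{equivalent} we already know that these two cocycles are equivalent, with the equivalence implemented by the linear map $\varphi : A \to H$ given by $\varphi(a) := s(a) - s'(a)$.

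First I would record the analogous twisted cocycles $(\rho_{(\beta,\alpha)},\psi_{(\beta,\alpha)},\phi_{(\beta,\alpha)},\sigma_{(\beta,\alpha)},\omega_{(\beta,\alpha)})$ and $(\rho'_{(\beta,\alpha)},\psi'_{(\beta,\alpha)},\phi'_{(\beta,\alpha)},\sigma'_{(\beta,\alpha)},\omega'_{(\beta,\alpha)})$ obtained from $s$ and $s'$ respectively via \eqref{aut-2-defi}. Since each twisted component is obtained from the untwisted one by pre- and post-composition with the fixed isomorphisms $\alpha^{-1}$, $\beta$, $\beta^{-1}$, the relations expressing the equivalence of $(\rho,\phi,\psi,\sigma,\omega)$ and $(\rho',\phi',\psi',\sigma',\omega')$ transport directly: substituting $\alpha^{-1}(a), \alpha^{-1}(b), \beta^{-1}(x)$ into those relations and composing with $\beta$ shows that $(\rho_{(\beta,\alpha)},\dots,\omega_{(\beta,\alpha)})$ and $(\rho'_{(\beta,\alpha)},\dots,\omega'_{(\beta,\alpha)})$ are equivalent, the equivalence being given by the map $\beta \circ \varphi \circ \alpha^{-1} : A \to H$.

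The core of the argument is then a linearity/bookkeeping step: the difference cocycle $(\rho_{(\beta,\alpha)},\dots) - (\rho,\dots)$ computed from $s$ differs from the one computed from $s'$ by a sum of two "coboundary-type" terms, namely the one coming from $\varphi$ (relating the two untwisted cocycles) and the one coming from $\beta\varphi\alpha^{-1}$ (relating the two twisted cocycles). I would check that the combined map $\beta\varphi\alpha^{-1} - \varphi : A \to H$ realizes the equivalence between the two difference cocycles in the sense of Definition \ref{equivalent}; this is a direct matching of the five defining relations, using that the "action part" of the difference cocycle is literally $\rho_{(\beta,\alpha)} - \rho$ etc., and that the equivalence relations of Definition \ref{equivalent} are additive in the cocycle and in the implementing map $\varphi$. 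Hence the two difference cocycles represent the same class in $\mathcal H^2_{nab}(A,H)$, i.e.\ $\mathcal W(\beta,\alpha)$ is independent of the section.

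The main obstacle I anticipate is purely organizational rather than conceptual: one must be careful that the equivalence relation on non-abelian $2$-cocycles in Definition \ref{equivalent} is not symmetric in an obvious way (the twisted cocycle $\rho'$ appears on the right-hand side of the $\sigma$- and $\omega$-relations), so the "additivity" of equivalences has to be verified by hand rather than invoked as a formal group-cohomology fact. Concretely, the $\sigma$ and $\omega$ relations involve quadratic terms $[\varphi(a),\varphi(b)]_H$ and $\varphi(a)\cdot_H\varphi(b)$, so when one composes two equivalences the cross terms must be tracked; I expect they combine correctly precisely because $H$ is a fixed post-Lie algebra and $\beta$ is a post-Lie automorphism of $H$, so $\beta$ commutes with $[\cdot,\cdot]_H$ and $\cdot_H$. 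Once this is checked, the proposition follows.
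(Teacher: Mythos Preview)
Your proposal is correct and follows essentially the same route as the paper: compare the untwisted cocycles via $\varphi$, the twisted cocycles via $\beta\varphi\alpha^{-1}$, and conclude that the two difference cocycles are equivalent via $\beta\varphi\alpha^{-1}-\varphi$. The paper's proof is in fact terser than yours at the final step---it simply asserts ``Combining the results above'' to obtain the equivalence of the differences---so your explicit flagging of the quadratic cross terms in the $\sigma$- and $\omega$-relations as the point requiring care is, if anything, more scrupulous than the published argument.
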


\begin{proof}
Let $s'$ be any other section with the induced non-abelian $2$-cocycle $(
\rho',\phi',\psi',\sigma',\omega')$. The non-abelian $2$-cocycles $(\rho, \phi, \psi, \sigma, \omega)$ and $(\rho', \phi', \psi', \sigma', \omega')$ are equivalent by the map $\varphi$ as defined in Definition \ref{equivalent}. On the other hand, for all $a,b \in A$ and $x \in H$, we get
\begin{align*}
(\rho_{(\beta, \alpha)})_a (x) - (\rho'_{(\beta, \alpha)})_a (x) =~& \beta (\rho_{\alpha^{-1} (a)} \beta^{-1}(x)) - \beta (\rho'_{\alpha^{-1} (a)} \beta^{-1}(x)) \\
=~& \beta [   \varphi \alpha^{-1} (a),\beta^{-1}(x)]_H = [ \beta \varphi \alpha^{-1} (a),x]_H,
\end{align*}
\begin{align*}
\sigma_{(\beta, \alpha)} (a,b) - \sigma'_{(\beta, \alpha)} (a,b) =~& (\rho'_{(\beta, \alpha)})_a (\beta \varphi \alpha^{-1} (b)) - (\rho'_{(\beta, \alpha)})_b (\beta \varphi \alpha^{-1} (a)) \\
~& \qquad - \beta \varphi \alpha^{-1} ([a,b]_A) + [\beta \varphi \alpha^{-1} (a), \beta \varphi \alpha^{-1} (b)]_H.
\end{align*}
The proof for other conditions follows a similar approach.
This demonstrates that
$$(\rho, \phi, \psi, \sigma, \omega)_{(\beta, \alpha)}~~\text{and}~~ (\rho',\psi',\phi',\sigma',\omega')_{(\beta, \alpha)}$$
are equivalent, with the equivalence given by $\beta \varphi \alpha^{-1}$. Combining the results above, we have that the non-abelian $2$-cocycles
$$(\rho, \phi, \psi, \sigma, \omega)_{(\beta, \alpha)} - (\rho,\phi,\psi,\sigma,\omega)$$
and
$$(\rho',\psi',\phi',\sigma',\omega')_{(\beta, \alpha)} - (\rho',\phi',\psi',\sigma',\omega')$$
are equivalent to each other and their equivalence is given by $\beta \varphi \alpha^{-1} - \varphi$. Hence, they corresponds to the same element in ${\mathcal H}^2_{nab} (A , H )$. This completes the proof.
\end{proof}

\begin{remark}
Theorem \ref{thm-inducibility-2} implies that a pair of automorphisms $(\beta, \alpha) \in \mathrm{Aut}(H) \times \mathrm{Aut}(A)$ is inducible if and only if $\mathcal{W}(\beta, \alpha)$ is trivial. In other words, $\mathcal{W}(\beta, \alpha)$ serves as an obstruction to the inducibility of the pair $(\beta, \alpha)$. Consequently, if the Wells map $\mathcal{W}$ is trivial, any pair of post-Lie algebra automorphisms $(\beta, \alpha) \in \mathrm{Aut}(H) \times \mathrm{Aut}(A)$ is inducible.
\end{remark}

\begin{theorem}\label{wells-seq}
Let $0 \rightarrow H  \xrightarrow{i} {E}  \xrightarrow{p} A  \rightarrow 0$ be a non-abelian extension of post-Lie algebras. Then there is an exact sequence
\begin{align*}
1 \rightarrow \mathrm{Aut}_H^{H, A} ({E} ) \xrightarrow{\iota} \mathrm{Aut}_H ({E} ) \xrightarrow{\Phi} \mathrm{Aut}(H ) \times \mathrm{Aut}(A ) \xrightarrow{\mathcal{W}} {\mathcal H}^2_{nab}(A , H ).
\end{align*}
Here $\mathrm{Aut}_H^{H, A} ({E} ) = \{ \gamma \in \mathrm{Aut} ({E} ) ~|~ \Phi(\gamma) = (\mathrm{id}_H, \mathrm{id}_A) \}$.
\end{theorem}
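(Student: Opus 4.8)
The plan is to verify exactness of the sequence at each of its three interior terms; two of these reduce to unwinding definitions, and the third is exactly the content of Theorem~\ref{thm-inducibility-2}, so no new computation is required. First observe that $\mathrm{Aut}_H^{H,A}(E)$ is by construction the kernel of the group homomorphism $\Phi$, hence a normal subgroup of $\mathrm{Aut}_H(E)$. Thus $\iota$ and $\Phi$ are genuine group homomorphisms, while $\mathcal{W}$ is a map of pointed sets whose base point is the trivial class $[0]\in{\mathcal H}^2_{nab}(A,H)$; indeed, substituting $\beta=\mathrm{id}_H$, $\alpha=\mathrm{id}_A$ in \eqref{aut-2-defi} returns the original cocycle $(\rho,\phi,\psi,\sigma,\omega)$, so $\mathcal{W}(\mathrm{id}_H,\mathrm{id}_A)=[0]$.

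Next I would dispatch the two formal exactness statements. Since $\iota$ is the inclusion of a subgroup it is injective, which gives exactness at $\mathrm{Aut}_H^{H,A}(E)$. Exactness at $\mathrm{Aut}_H(E)$ is immediate from the definition of $\mathrm{Aut}_H^{H,A}(E)$: one has $\mathrm{im}(\iota)=\mathrm{Aut}_H^{H,A}(E)=\{\gamma\in\mathrm{Aut}(E)\mid\Phi(\gamma)=(\mathrm{id}_H,\mathrm{id}_A)\}=\ker(\Phi)$.

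The substantive step is exactness at $\mathrm{Aut}(H)\times\mathrm{Aut}(A)$, that is, the equality $\mathrm{im}(\Phi)=\mathcal{W}^{-1}([0])$. I would first invoke Proposition~\ref{wells-s-ind} so that $\mathcal{W}$, and hence the claim, is independent of the section $s$ defining $(\rho,\phi,\psi,\sigma,\omega)$. Then a pair $(\beta,\alpha)\in\mathrm{Aut}(H)\times\mathrm{Aut}(A)$ lies in $\mathrm{im}(\Phi)$ precisely when it is inducible, by the definition of inducibility via the homomorphism $\Phi$. By Theorem~\ref{thm-inducibility-2}, $(\beta,\alpha)$ is inducible if and only if the non-abelian $2$-cocycles $(\rho,\phi,\psi,\sigma,\omega)$ and $(\rho_{(\beta,\alpha)},\psi_{(\beta,\alpha)},\phi_{(\beta,\alpha)},\sigma_{(\beta,\alpha)},\omega_{(\beta,\alpha)})$ are equivalent in the sense of Definition~\ref{equivalent}, and by the definition~\eqref{wells-m} of $\mathcal{W}$ this is precisely the condition $\mathcal{W}(\beta,\alpha)=[0]$. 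Chaining these equivalences yields $\mathrm{im}(\Phi)=\mathcal{W}^{-1}([0])$, completing the proof.

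Since all the analytic work has already been carried out in Theorems~\ref{thm-inducibility} and~\ref{thm-inducibility-2} and in Proposition~\ref{wells-s-ind}, I expect the only point needing care to be interpretational rather than computational: one must fix the convention that exactness at the terminal pointed set ${\mathcal H}^2_{nab}(A,H)$ means $\mathrm{im}(\Phi)=\mathcal{W}^{-1}([0])$ for the distinguished class $[0]$, and one must consistently read ``$\mathcal{W}(\beta,\alpha)=[0]$'' as the assertion that the cocycle appearing in \eqref{wells-m} represents the trivial class, equivalently that the two relevant $2$-cocycles are equivalent. With that fixed, the statement follows by assembling the results cited above.
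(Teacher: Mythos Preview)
Your proposal is correct and follows essentially the same approach as the paper: the first two exactness statements are immediate from the definitions, and exactness at $\mathrm{Aut}(H)\times\mathrm{Aut}(A)$ is deduced directly from Theorem~\ref{thm-inducibility-2}. Your additional remarks on the pointed-set interpretation of $\mathcal{W}$ and the invocation of Proposition~\ref{wells-s-ind} are sound and slightly more careful than the paper's version, but the argument is the same.
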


\begin{proof}
Given that the inclusion map $\iota : \mathrm{Aut}_H^{H, A} ({E}) \rightarrow \mathrm{Aut}_H ({E})$ is an injection, the sequence is exact at the first term.
Consider an element $\gamma \in \mathrm{ker} (\Phi)$. This implies that $\gamma \in \mathrm{Aut}_H({E})$ and satisfies $\gamma|_H = \mathrm{id}_H$ and $p \gamma s = \mathrm{id}_A$. Therefore, $\gamma \in \mathrm{Aut}_H^{H, A} ({E})$. Conversely, if $\gamma \in \mathrm{Aut}_H^{H, A} ({E})$, then $\gamma \in \mathrm{ker}(\Phi)$. Consequently, we have $\mathrm{ker}(\Phi) = \mathrm{Aut}_H^{H, A} ({E}) \cong \mathrm{im}(\iota)$. This confirms that the sequence is exact at the second term.

To demonstrate that the sequence is exact at the third term, consider a pair $(\beta, \alpha) \in \mathrm{ker}(\mathcal{W})$. Since the non-abelian $2$-cocycles $(
\rho,\psi,\phi_,\sigma,\omega)_{(\beta, \alpha)}$ and $(
\rho,\phi,\psi, \sigma,\omega)$ are equivalent, Theorem \ref{thm-inducibility-2} implies that the pair \((\beta, \alpha)\) is inducible.  This means there exists an automorphism  $\gamma \in \mathrm{Aut}_H({E} )$ such that $\Phi (\gamma) = (\beta, \alpha)$. This shows that $(\beta, \alpha) \in \mathrm{im}(\Phi)$. Conversely, if a pair $(\beta, \alpha) \in \mathrm{im}(\Phi)$, then by definition the pair $(\beta, \alpha)$ is inducible. According to Theorem \ref{thm-inducibility-2}, the non-abelian 2-cocycles \( (\rho,\psi,\phi_,\sigma,\omega)_{(\beta, \alpha)} \) and \((\rho, \phi, \psi, \sigma, \omega)\) are equivalent. Therefore, \(\mathcal{W}(\beta, \alpha) = 0\), which implies that \((\beta, \alpha) \in \mathrm{ker}(\mathcal{W})\). Thus, we have \(\mathrm{ker}(\mathcal{W}) = \mathrm{im}(\Phi)\), confirming the exactness of the sequence at the third term.
\end{proof}

\subsection{The spacial case of abelian extension}
 In this section, we consider abelian extensions of post-Lie algebras as a particular case.

\medskip

Let $0 \rightarrow V  \xrightarrow{i} {E}  \xrightarrow{p} A  \rightarrow 0$
 be an abelian extension of post-Lie algebras. The extension is called abelian if $V$ is abelian, i.e. $\cdot_{V}=0$ and $[\,,\,]_{V}=0$. A section $s:A \longrightarrow E$ of $p$
is a linear map satisfying  $p\circ s=\id_{A}$. An action of ${A}$ on ${V}$ consists of linear maps  $\rho\,: V\times A \to V,$ $\psi: V \times A \to V$ and $\phi: A \times V \to V$. This forms a representation for a post-Lie algebra and is independent of the choice of the section $s$.

The concept of equivalence between two abelian extensions can be defined similarly to that of non-abelian extensions. Notably, equivalent abelian extensions give the same representation of $A$ on $V$. Furthermore, there is a one-to-one correspondence between equivalence classes
of abelian extensions of the post-Lie algebra $A$ by $V$ and the second cohomology  group ${\mathcal H}^2(A,V)$. Define the linear maps $\sigma :  A\times A \rightarrow  H,~\omega :  A\times A \rightarrow H$ by
 $$\sigma(a,b):=[s(a),s(b)]_{{E}}-s[a,b]_{A},~
 \omega(a,b):=s(a)\cdot_{{E}} s(b)-s(a\cdot_{A} b).$$ Then the $(\sigma, \omega)$ is an abelian 2-cocycle of $A$ with values in $V$ if it satisfies conditions (A3)--(A5), (A8)--(A10), where the operation on $V$ is trivial.

 Let $\mathrm{Aut}_V ({E} )$ be the set of all automorphisms $\gamma \in \mathrm{Aut}({E} )$ that satisfies $\gamma|_V \subset V$. For any $\gamma \in \mathrm{Aut}_V ({E} )$, it follows that $\gamma|_V \in \mathrm{Aut}(V )$. We can also define a map $\overline{\gamma} : A \rightarrow A$ by
\begin{align*}
\overline{\gamma} (a) := p \gamma s (a), \text{ for } a \in A.
\end{align*}
Note that $\overline{\gamma}$ is independent of the choice of $s$ and is indeed an automorphism. This construction gives rise to a group homomorphism
\begin{align*}
\Phi : \mathrm{Aut}_V ({E} ) \rightarrow \mathrm{Aut}(V ) \times \mathrm{Aut}(A ), ~~ \Phi (\gamma) := (\gamma|_V, \overline{ \gamma}).
\end{align*}
A pair of automorphisms $(\beta, \alpha) \in \mathrm{Aut}(V ) \times \mathrm{Aut}(A )$ is said to be {\bf inducible} if the pair $(\beta, \alpha)$ lies in the image of $\Phi$.

In the following result, we find a necessary and sufficient condition for the question when a pair of automorphisms $(\beta, \alpha) \in \mathrm{Aut}(V ) \times \mathrm{Aut}(A )$ is inducible.

\begin{theorem}\label{thm-inducibility'}
Let $0 \rightarrow V  \xrightarrow{i} {E}  \xrightarrow{p} A  \rightarrow 0$ be an abelian extension of post-Lie algebras. A pair $(\beta, \alpha) \in \mathrm{Aut}(V ) \times \mathrm{Aut}(A )$ is inducible if and only if there exists a linear map $\lambda \in \mathrm{Hom}(A, V)$ satisfying the following conditions:
\begin{align}
&\beta(\rho_{a}x)=\rho_{\alpha (a)}\beta (x),\\
&\beta(\psi_{a}x)=\psi_{\alpha(a)}\beta(x),\\
&\beta(\phi_{a}x)=\phi_{\alpha(a)}\beta(x),\\
\label{comp1}&\beta(\sigma(a_1,a_2))-\sigma(\alpha(a_1),\alpha(a_2))
=\rho_{\alpha (a_1)}(\lambda (a_2))- \rho_{\alpha (a_2)}( \lambda (a_1))-\lambda([a_1,a_2]_{{A}}),\\
\label{comp2}&\beta(\omega(a_1,a_2))-\omega(\alpha(a_1),\alpha(a_2))
=\psi_{\alpha(a_2)}\lambda(a_1)+\phi_{\alpha(a_1)}\lambda(a_2)
-\lambda(a_1\cdot_{{A}}a_2),
\end{align}
for all $a \in A, x \in V$.
\end{theorem}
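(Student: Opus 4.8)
The plan is to specialize the argument of Theorem \ref{thm-inducibility} to the abelian situation, where $[\cdot,\cdot]_V=0$ and $\cdot_V=0$. All terms appearing in conditions (I)--(V) of that theorem that involve a bracket or product with two arguments in the fibre will drop out, and the surviving identities are precisely the five displayed conditions (the first three, together with \eqref{comp1} and \eqref{comp2}).

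For the ``only if'' direction, suppose $(\beta,\alpha)$ is inducible, so there is $\gamma\in\mathrm{Aut}_V(E)$ with $\gamma|_V=\beta$ and $p\gamma s=\alpha$. As in the proof of Theorem \ref{thm-inducibility}, one checks $p\big((\gamma s-s\alpha)(a)\big)=\alpha(a)-\alpha(a)=0$, hence $(\gamma s-s\alpha)(a)\in\ker p\cong V$, and we set $\lambda(a):=(\gamma s-s\alpha)(a)$. Running verbatim the five computations of that proof and using at the end that $[\lambda(a),\beta(x)]_V=0$, $\beta(x)\cdot_V\lambda(a)=0$, $\lambda(a)\cdot_V\beta(x)=0$, $[\lambda(a_1),\lambda(a_2)]_V=0$ and $\lambda(a_1)\cdot_V\lambda(a_2)=0$, we obtain exactly the stated identities for $\rho,\psi,\phi$ and equations \eqref{comp1}, \eqref{comp2}.

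Conversely, given $\lambda\in\mathrm{Hom}(A,V)$ satisfying the five conditions, define $\gamma:E\to E$ by $\gamma(x+s(a)):=\big(\beta(x)+\lambda(a)\big)+s(\alpha(a))$ for $x\in V$, $a\in A$. The bijectivity argument is identical to the one in Theorem \ref{thm-inducibility}: injectivity because $\gamma(x+s(a))=0$ forces $s(\alpha(a))=0$, hence $a=0$ and then $\beta(x)=0$, so $x=0$; surjectivity by exhibiting the explicit preimage $\big(\beta^{-1}(x)-\beta^{-1}\lambda\alpha^{-1}(a)\big)+s(\alpha^{-1}(a))$. To see that $\gamma$ is a post-Lie homomorphism, expand $[\gamma(e_1),\gamma(e_2)]_E$ and $\gamma(e_1)\cdot_E\gamma(e_2)$ for $e_i=x_i+s(a_i)$ exactly as in that proof; the cross terms with two arguments in $V$ now vanish by abelianness, and the remaining terms reassemble, using the five conditions, into $\gamma([e_1,e_2]_E)$ and $\gamma(e_1\cdot_E e_2)$ respectively. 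Hence $\gamma\in\mathrm{Aut}(E)$; since $\gamma(V)\subseteq V$ we have $\gamma\in\mathrm{Aut}_V(E)$, and $\gamma|_V=\beta$, $p\gamma s=\alpha$, so $\Phi(\gamma)=(\beta,\alpha)$ and $(\beta,\alpha)$ is inducible.

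The only real obstacle is bookkeeping: one must keep careful track of which terms in the long computations of Theorem \ref{thm-inducibility} genuinely survive once the structure on $V$ is trivial, and correspondingly that among the cocycle conditions only (A3)--(A5), (A8)--(A10) remain in force. There is no new conceptual ingredient; in fact the result can equally be viewed as the formal corollary of Theorem \ref{thm-inducibility} obtained by taking the fibre to be the abelian post-Lie algebra and observing that conditions (I)--(V) then collapse to the displayed equations.
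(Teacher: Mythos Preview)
Your proposal is correct and matches the paper's approach: the paper presents this theorem as the abelian specialization of Theorem~\ref{thm-inducibility} and in fact gives no separate proof, so your derivation by setting $[\cdot,\cdot]_V=0$ and $\cdot_V=0$ in conditions (I)--(V) is exactly what is intended.
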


 A pair $(\beta,\, \alpha) \in \Aut(V) \times \Aut(A)$ is called compatible if the conditionz (\ref{comp1}) and (\ref{comp2}) hold.
Let $0 \rightarrow V  \xrightarrow{i} {E}  \xrightarrow{p} A  \rightarrow 0$ be an abelian extension of $A $ by $V $. For any $(\beta , \alpha) \in \mathrm{Aut}(V ) \times \mathrm{Aut} (A )$, we define $(\sigma_{(\beta, \alpha)},\omega_{(\beta, \alpha)})$ by
\begin{align*}
   \sigma_{(\beta, \alpha)} (a, b) = \beta \circ \sigma (\alpha^{-1}(a), \alpha^{-1}(b)),\quad
   \omega_{(\beta, \alpha)} (a, b) = \beta \circ \omega(\alpha^{-1}(a), \alpha^{-1}(b)).
\end{align*}
for $a, b \in A$.
Then we obtain that $(\sigma_{(\beta, \alpha)},\omega_{(\beta, \alpha)})$ is an abelian $2$-cocycle if  $(\beta,\, \alpha) $ is compatible. We define a set map $\mathcal{W} : \mathrm{Aut}(V ) \times \mathrm{Aut} (A ) \rightarrow {\mathcal H}^2(A,V)$ by
\begin{align}\label{wells-m1}
\mathcal{W}(\beta, \alpha) = [(\sigma_{(\beta, \alpha)},\omega_{(\beta, \alpha)}) - (\sigma,\omega)].
\end{align}
This map is independent of the choice of section. Finally, we fit the Wells map into a short exact sequence.

\begin{theorem}\label{wells-seq1}
Let $0 \rightarrow V  \xrightarrow{i} {E}  \xrightarrow{p} A  \rightarrow 0$ be an abelian extension of post-Lie algebras. Then there is an exact sequence
\begin{align*}
1 \rightarrow \mathrm{Aut}_V^{V, A} ({E} ) \xrightarrow{\iota} \mathrm{Aut}_V ({E} ) \xrightarrow{\Phi} \mathrm{Aut}(V ) \times \mathrm{Aut}(A ) \xrightarrow{\mathcal{W}} {\mathcal H}^2(A,V).
\end{align*}
\end{theorem}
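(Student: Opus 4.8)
The plan is to establish the exactness of
$$1 \rightarrow \mathrm{Aut}_V^{V, A} ({E} ) \xrightarrow{\iota} \mathrm{Aut}_V ({E} ) \xrightarrow{\Phi} \mathrm{Aut}(V ) \times \mathrm{Aut}(A ) \xrightarrow{\mathcal{W}} {\mathcal H}^2(A;V)$$
at each of the three non-trivial spots, mirroring the proof of Theorem \ref{wells-seq} but keeping track of the fact that the action of $A$ on $V$ no longer depends on the section. The argument is essentially the same as in the non-abelian case; the only structural input that changes is that $V$ is abelian, so the conditions on a $2$-cocycle reduce to (A3)--(A5), (A8)--(A10), and the relevant classifying object is the ordinary second cohomology ${\mathcal H}^2(A;V)$.

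First I would check exactness at $\mathrm{Aut}_V({E})$: since $\iota$ is literally the inclusion of a subgroup, it is injective, and by the very definition $\mathrm{Aut}_V^{V,A}(E) = \{\gamma \in \mathrm{Aut}(E) \mid \Phi(\gamma) = (\id_V, \id_A)\} = \ker\Phi$, so $\im\iota = \ker\Phi$. Next, exactness at $\mathrm{Aut}(V) \times \mathrm{Aut}(A)$: by definition $(\beta,\alpha) \in \im\Phi$ exactly when $(\beta,\alpha)$ is inducible, and by Theorem \ref{thm-inducibility'} (the abelian analogue of Theorem \ref{thm-inducibility}) this happens if and only if there is a $\lambda \in \Hom(A,V)$ satisfying the compatibility conditions \eqref{comp1} and \eqref{comp2}; one then recognizes, exactly as in the proof of Theorem \ref{thm-inducibility-2}, that such a $\lambda$ is precisely a map realizing the equivalence of the abelian $2$-cocycles $(\sigma_{(\beta,\alpha)},\omega_{(\beta,\alpha)})$ and $(\sigma,\omega)$, i.e. that $\mathcal{W}(\beta,\alpha) = [(\sigma_{(\beta,\alpha)},\omega_{(\beta,\alpha)}) - (\sigma,\omega)] = 0$. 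Hence $\im\Phi = \ker\mathcal{W}$.

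A small preliminary step that I would spell out for safety is that $\mathcal{W}$ is well defined, i.e. independent of the chosen section; this is the abelian shadow of Proposition \ref{wells-s-ind}, and the same computation (two sections differ by a $\varphi : A \to V$, and then $(\sigma_{(\beta,\alpha)},\omega_{(\beta,\alpha)}) - (\sigma,\omega)$ and its primed counterpart differ by $\beta\varphi\alpha^{-1} - \varphi$, a coboundary) goes through with the brackets on $V$ set to zero. I would also note that the automorphism $\overline{\gamma} = p\gamma s$ is section-independent and lands in $\mathrm{Aut}(A)$ by the same argument given in the text for the non-abelian case.

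I do not expect a genuine obstacle here: the whole theorem is a formal consequence of Theorem \ref{thm-inducibility'} plus the definition of $\mathcal{W}$, and every ingredient has already been produced in the non-abelian setting. The only place demanding a little care is the identification in the exactness-at-the-third-term step — verifying that "a compatible $\lambda$ exists" and "$\mathcal{W}(\beta,\alpha)=0$" are literally the same statement — which amounts to substituting $\alpha^{-1}(a),\alpha^{-1}(b)$ into the compatibility relations and comparing with the definition of cocycle equivalence (with $\varphi := \lambda\alpha^{-1}$), exactly as in the proof of Theorem \ref{thm-inducibility-2}; so I would present that step explicitly and leave the other two as the routine observations above.
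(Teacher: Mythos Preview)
Your proposal is correct and takes essentially the same approach as the paper: the paper gives no separate proof of Theorem \ref{wells-seq1} and implicitly relies on specializing the argument of Theorem \ref{wells-seq} to the abelian setting, which is precisely what you outline. The only small caveat is that Theorem \ref{thm-inducibility'} carries five conditions, the first three of which (the representation-compatibility identities $\beta\rho_a=\rho_{\alpha(a)}\beta$, etc.) do not involve $\lambda$; these are exactly what is needed so that $(\sigma_{(\beta,\alpha)},\omega_{(\beta,\alpha)})$ is a $2$-cocycle for the \emph{fixed} representation, and hence that $\mathcal{W}(\beta,\alpha)$ lands in ${\mathcal H}^2(A;V)$ --- a point the paper also leaves implicit.
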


\section*{Acknowledgements}
This research was supported by the National Natural Science Foundation of China (No. 11961049).
We thank Professor Dominique Manchon for valuable suggestions and reminding us the paper \cite{FP} on semidirect products of skew braces.

%
%

%

%


\vskip7pt
\footnotesize{
	\noindent Lisi Bai\\
  School of Mathematics and Statistics,\\
	Henan Normal University, Xinxiang 453007, P. R. China};\\
E-mail address: \texttt{{bailisi0430@163.com}}

\vskip7pt
\footnotesize{
\noindent Tao Zhang\\
School of Mathematics and Statistics,\\
Henan Normal University, Xinxiang 453007, P. R. China;\\
 E-mail address: \texttt{{zhangtao@htu.edu.cn}}

\end{document}